\providecommand{\algorithmname}{Algorithm}
\numberwithin{equation}{section}
\numberwithin{figure}{section}
\theoremstyle{plain}
\newtheorem{thm}{\protect\theoremname}[section]
  \theoremstyle{plain}
  \newtheorem{lyxalgorithm}[thm]{\protect\algorithmname}
  \theoremstyle{remark}
  \newtheorem{rem}[thm]{\protect\remarkname}
  \theoremstyle{remark}
  \newtheorem{claim}[thm]{\protect\claimname}
  \theoremstyle{plain}
  \newtheorem{prop}[thm]{\protect\propositionname}
  \theoremstyle{plain}
  \newtheorem{fact}[thm]{\protect\factname}
  \theoremstyle{plain}
  \newtheorem{assumption}[thm]{\protect\assumptionname}
  \theoremstyle{plain}
  \newtheorem{lem}[thm]{\protect\lemmaname}
  \providecommand{\algorithmname}{Algorithm}
  \providecommand{\assumptionname}{Assumption}
  \providecommand{\claimname}{Claim}
  \providecommand{\factname}{Fact}
  \providecommand{\lemmaname}{Lemma}
  \providecommand{\propositionname}{Proposition}
  \providecommand{\remarkname}{Remark}
\providecommand{\theoremname}{Theorem}
\begin{document}
\title[Dykstra splitting and approximate proximal point]{Dykstra splitting and an approximate proximal point algorithm for minimizing the sum of convex functions} 

\subjclass[2010]{90C25, 65K05, 68Q25, 47J25}
\begin{abstract}
We show that Dykstra's splitting for projecting onto the intersection
of convex sets can be extended to minimize the sum of convex functions
and a regularizing quadratic. We give conditions for which convergence
to the primal minimizer holds so that more than one convex function
can be minimized at a time, the convex functions are not necessarily
sampled in a cyclic manner, and the SHQP strategy for problems involving
the intersection of more than one convex set can be applied. When
the sum does not involve the regularizing quadratic, we discuss an
approximate proximal point method combined with Dykstra's splitting
to minimize this sum. 
\end{abstract}

\author{C.H. Jeffrey Pang}

\thanks{We acknowledge grant R-146-000-214-112 from the Faculty of Science,
National University of Singapore. We gratefully acknowledge discussions
with Ting-Kei Pong on Dykstra's splitting which led to this paper.}

\curraddr{Department of Mathematics\\ 
National University of Singapore\\ 
Block S17 08-11\\ 
10 Lower Kent Ridge Road\\ 
Singapore 119076 }

\email{matpchj@nus.edu.sg}

\date{\today{}}

\keywords{Dykstra's splitting, proximal point algorithm, block coordinate minimization}

\maketitle
\tableofcontents{}

\section{Introduction}

Throughout this paper, let $X$ be a finite dimensional Hilbert space.
Consider the problem of minimizing the sum of convex functions 
\begin{equation}
\sum_{i=1}^{r}h_{i}(\cdot),\label{eq:main-pblm}
\end{equation}
where $h_{i}:X\to\mathbb{R}\cup\{\infty\}$ are closed proper convex
functions. The aim of this paper is to combine Dykstra's splitting
and an approximate proximal point algorithm in order to minimize \eqref{eq:main-pblm}.

\subsection{Dykstra's algorithm}

For closed convex sets $C_{i}$, where $i\in\{1,\dots,r\}$, Dykstra's
algorithm \cite{Dykstra83} solves the problem 
\begin{equation}
\min_{x}\frac{1}{2}\|x-x_{0}\|^{2}+\sum_{i=1}^{r}\delta_{C_{i}}(x),\label{eq:Dykstra-form}
\end{equation}
where $\delta_{C_{i}}(\cdot)$ is the indicator function of the set
$C_{i}$. Note that \eqref{eq:Dykstra-form} is also equivalent to
the problem of projecting the point $x_{0}$ onto $\cap_{i=1}^{r}C_{i}$.
The projection onto the intersection $\cap_{i=1}^{r}C_{i}$ may be
difficult, but each step of Dykstra's algorithm requires only the
projection onto one set $C_{i}$ at a time. Its convergence to a primal
minimizer without constraint qualifications was established in \cite{BD86}.
Separately, Dykstra's algorithm was rediscovered in \cite{Han88},
who noticed that it is block coordinate minimization on the dual problem,
and proved the convergence to a primal minimizer, but under a constraint
qualification. This dual perspective was also noticed by \cite{Gaffke_Mathar},
who built on \cite{BD86} and used duality to prove the convergence
to a primal minimizer without constraint qualifications. 

Dykstra's algorithm can be made into a parallel algorithm by using
the product space approach largely attributed to \cite{Pierra84}.
But this parallelization is slower than the original Dykstra's algorithm
because the dual variables are not updated in a Gauss Seidel manner.
(In other words, the dual variables are not updated with the most
recent values of the other dual variables.) It was also noticed in
\cite{Hundal-Deutsch-97} (among other things) that the projections
onto the sets $C_{i}$ need not be performed in a cyclic manner to
achieve convergence. In \cite{Pang_DBAP}, we studied a SHQP (supporting
halfspace and quadratic programming) heuristic for improving the convergence
of Dykstra's algorithm by noticing that the projection operations
onto the sets $C_{i}$ generate halfspaces containing $C_{i}$, and
the intersection of these halfspaces can be a better approximate of
$\cap_{i=1}^{r}C_{i}$ than each $C_{i}$ alone. 

We now refer to the natural extension of Dykstra's algorithm for minimizing 

\begin{equation}
\min_{x}\frac{1}{2}\|x-x_{0}\|^{2}+\sum_{i=1}^{r}h_{i}(x),\label{eq:Dykstra-form-1}
\end{equation}
where $h_{i}(\cdot)$ are generalized to be closed convex functions,
as Dykstra's splitting. Instead of projections, one now uses proximal
mappings. (See \eqref{eq:primal-subpblm} for an example.) Dykstra's
splitting was studied in \cite{Han89} and \cite{Tseng-93-Dual-ascent}
for the case of $r\geq2$, and they proved the convergence (to the
primal minimizer) under constraint qualifications. It was also proved
in \cite{Bauschke-Combettes-Dykstra-split-2008} that Dykstra's splitting
converges for the case of $r=2$ without constraint qualifications. 

Dykstra's algorithm is related to the method of alternating projections
for finding a point in the intersection more than one closed set.
For more information on the various topics in Dykstra's algorithm
mentioned so far, we refer to \cite{Deutsch01_survey,Deustch01,BauschkeCombettes11,EsRa11}.

\subsection{Block coordinate minimization}

For the problem of minimizing $f(x)+g(x)$, where $f(\cdot)$ is smooth
and $g(\cdot)$ is block separable, one strategy is to minimize one
block of the variables at a time, keeping the others fixed. This strategy
is called block coordinate minimization, or alternating minimization.
Nonasymptotic convergence rates of $O(1/k)$ to the optimal value
were obtained for when the smooth function is not known to be strongly
convex in \cite{Beck_Tetruashvili_2013,Beck_alt_min_SIOPT_2015}.
We refer to these papers for more on the history of block coordinate
minimization. 

The smooth portion of the dual problem in Dykstra's algorithm is a
specific quadratic function, so block coordinate minimization for
this problem coincides with a block coordinate proximal gradient approach
in \cite{Tseng_YUn_MAPR_2009,Tseng_Yun_JOTA_2009}. Convergence properties
of minimizing over more than one block at a time were discussed. There
is too much recent research on block coordinate minimization and block
coordinate proximal gradient, so we refer the reader to the two recent
references \cite{Wright_BCD_MAPR2015,Hong_Wang_Razaviyayn_Luo_MP_BCD}
and their references within.

\subsection{Proximal point algorithm}

The proximal point algorithm attributed to \cite{Martinet,Rockafellar-Prox-Point}
is a method for finding minimizers of $\min_{x}f(x)$ by creating
a sequence $\{x_{j}\}_{j}$ such that 
\[
\begin{array}{c}
x_{j+1}\approx\mbox{prox}_{f}(x_{j}):=\arg\min_{x}f(x)+\frac{1}{2}\|x-x_{j}\|^{2}.\end{array}
\]
It was noticed in \cite{Han89} that one can use the proximal point
algorithm to solve \eqref{eq:main-pblm} by approximately solving
a sequence of problems of the form \eqref{eq:Dykstra-form-1} using
Dykstra's algorithm. The rules there for moving to a new proximal
center $x_{j}$ involves finding a primal feasible point that satisfies
the optimality conditions approximately. But such a feasible point
might not be found in a finite number of iterations when some of the
functions $h_{i}(\cdot)$ are indicator functions, so a separate rule
for moving the proximal center is needed.

\subsection{Other methods for minimizing the sum of functions}

When the constraint sets are either too big and have to be split up
as the intersection of more than 1 set, or when these constraint sets
are only revealed as the algorithm is run, it is beneficial to write
these problems in the form \eqref{eq:main-pblm} where two or more
of the $h_{i}(\cdot)$ are indicator functions. In such a case, as
remarked in \cite{Nedic_MP_2011}, the accelerated methods of \cite{Nesterov_book}
and further developed by \cite{BeckTeboulle2009,Tseng_APG_2008} do
not immediately apply (to the primal problem). We now recall other
methods and observations on minimizing \eqref{eq:main-pblm} when
more than one of the functions $h_{i}(\cdot)$ are indicator functions
and the algorithm can operate on a few of the functions $h_{i}(\cdot)$
at a time. As we have seen earlier, Dykstra's algorithm is one such
example. 

In the case where all the functions $h_{i}(\cdot)$ in \eqref{eq:main-pblm}
are indicator functions, then this problem coincides with the problem
of finding a point in the intersection of convex sets, which is a
problem of much interest on its own. (See for example \cite{EsRa11,BB96_survey,Deustch01}.)
We refer to this as the convex feasibility problem. The convex feasibility
problem can be solved by the method of alternating projections and
the Douglas-Rachford method. A discussion of the effectiveness of
methods for the convex feasibility problem is \cite{CensorChenCombettesDavidiHerman12}.

Beyond the convex feasibility problem, various extensions of the subgradient
method in \cite{Helou_Neto_De_Pierro_SIOPT_2009,Ram_Nedic_Veeravalli_SIOPT_2009,Nedic_MP_2011}
can solve problems of the form \eqref{eq:main-pblm}. Another recent
development is in superiorization (See for example \cite{Censor_Davidi_Herman_Superiorization}),
where an algorithm for the convex feasibility problem is perturbed
to try to reduce the value of the objective function. The result is
an algorithm that seeks feasibility at a rate comparable to algorithms
for the feasibility problem, while achieving a superior objective
value to what an algorithm for the feasibility problem alone would
achieve. A comparison of projected subgradient methods and superiorization
is given in \cite{Censor_Davidi_Herman_Schulte_Tetruashvili}. 

A typical assumption on the constraint sets is that they have a Lipschitzian
error bound, which is also equivalent to the stability of the intersection
under perturbations. See for example \cite{BBL99,Burke_Deng_2005,Ng_Yang_2004,Kruger_06}. 

Lastly, another method for minimizing \eqref{eq:main-pblm} is the
ADMM \cite{Boyd_Eckstein_ADMM_review}. The ADMM is an effective method,
but we feel that Dykstra's splitting still has its own value. For
example, as we shall see later, the different agents can minimize
in any order, and convergence doesn't even require the existence of
a dual minimizer. In problems where the different agents are assumed
not to be able to freely communicate between each other or if communications
between two agents are one dimensional, methods derived from subgradient
algorithms can still be a method of choice \cite{Nedich_survey},
even though many algorithms are preferred over the subgradient algorithm
in large scale problems with less restrictive communcation requirements
\cite{Nesterov_book}.

We refer to the survey \cite{Combettes_Pesquet} for other proximal
techniques for minimizing \eqref{eq:main-pblm}.

\subsection{\label{sub:Contrib}Contributions of this paper}

Firstly, in Section \ref{sec:Dykstra-splitting}, we extend Dykstra's
splitting for minimizing \eqref{eq:Dykstra-form-1} so that
\begin{itemize}
\item [(A)]the proof of convergence does not require constraint qualifications,
\item [(B)]the $r$ in \eqref{eq:Dykstra-form-1} is any number greater
than or equal to 2, and 
\item [(C)]$h_{i}(\cdot)$ can be any closed convex function instead of
the indicator function.
\end{itemize}
As mentioned earlier, \cite{BD86} and \cite{Gaffke_Mathar} have
features (A) and (B),  \cite{Han89} has (B) and (C), and \cite{Bauschke-Combettes-Dykstra-split-2008}
has (A) and (C). We are not aware of Dykstra's splitting being proved
to have features (A), (B) and (C). In addition, our analysis incorporates
these features that are now rather standard in block coordinate minimization
algorithms.
\begin{itemize}
\item [(D)]the convex functions $h_{i}(\cdot)$ are not necessarily sampled
in a cyclic manner like in \cite{Hundal-Deutsch-97},
\item [(E)]more than one convex function $h_{i}(\cdot)$ can be minimized
at one time in the Dykstra's splitting, and
\item [(F)]the SHQP strategy in \cite{Pang_DBAP} is applied.
\end{itemize}
The proof is largely adapted from \cite{Gaffke_Mathar}. This paper
also updates the discussion of the SHQP strategy in \cite{Pang_DBAP}
by pointing out that if the convex functions $\delta_{C_{i}}^{*}(\cdot)$
are not necessarily sampled in a cyclic manner, then we just need
one set of the form $\tilde{C}^{n,w}$ in Algorithm \ref{alg:Ext-Dyk}
instead of multiple sets of this type as was done in \cite{Pang_DBAP}. 

Secondly, in Section \ref{sec:approx-prox}, we show that one can
minimize problems of the form \eqref{eq:main-pblm} where the feasible
region is a compact set by combining Dykstra's splitting on problems
of the kind \eqref{eq:Dykstra-form-1} and an approximate proximal
point algorithm where the proximal center is moved once the KKT conditions
are approximately satisfied. The compactness of the feasible region
allows us to remove the constraint qualifications on the constraint
sets for our results.

In Section \ref{sec:O-1-n-convergence}, we show that if a dual minimizer
exists and some processing is performed so that the dual multipliers
related to the indicator functions are uniformly bounded throughout
all iterations, an $O(1/n)$ convergence of the dual problem (which
leads to an $O(1/\sqrt{n})$ convergence to the primal minimizer)
can be attained.

\subsection{Notation}

We use ``$\partial$'' to refer to either the subdifferential of
a convex function, or the boundary of a set, which should be clear
from context. The conjugate $\delta_{C}^{*}(\cdot)$ of the indicator
function has the form $\delta_{C}^{*}(y)=\sup_{x\in C}\langle y,x\rangle$,
and is also known as the support function.

\section{\label{sec:Dykstra-splitting}Dykstra splitting for the sum of convex
functions}

Consider the primal problem 
\begin{equation}
\begin{array}{c}
(P)\quad\alpha=\underset{x\in X}{\min}\frac{1}{2}\|x-x_{0}\|^{2}+\underset{i=1}{\overset{r_{1}}{\sum}}f_{i}(x)+\underset{i=r_{1}+1}{\overset{r_{2}}{\sum}}g_{i}(x)+\underset{i=r_{2}+1}{\overset{r}{\sum}}\delta_{C_{i}}(x),\end{array}\label{eq:primal}
\end{equation}
where $X$ is a finite dimensional Hilbert space, and 
\begin{itemize}
\item [(A1)]$f_{i}:X\to\mathbb{R}$ are convex functions such that $\mbox{dom}f_{i}(\cdot)=X$
for all $i\in\{1,\dots,r_{1}\}$.
\item [(A2)]$g_{i}:X\to\mathbb{R}$ are lower semicontinuous convex functions
for all $i\in\{r_{1}+1,\dots,r_{2}\}$.
\item [(A3)]$C_{i}$ are closed convex subsets of $X$ for all $i\in\{r_{2}+1,\dots,r\}$.
\end{itemize}
In this section, we generalize the proof in \cite{Gaffke_Mathar}
to show that Dykstra's splitting algorithm can be used to minimize
problems of the form \eqref{eq:primal}.

We note that the functions $\delta_{C_{i}}(\cdot)$ and $f_{i}(\cdot)$
can be written as $g_{i}(\cdot)$. But as we will see later, we will
treat the functions of the three types differently in Algorithm \ref{alg:Ext-Dyk}.
 For convenience of future discussions, let $h:X\to\mathbb{R}$ and
$h_{i}:X\to\mathbb{R}$ be the convex functions defined by 
\[
h(\cdot)=\sum_{i=1}^{r}h_{i}(\cdot)\mbox{, and }h_{i}(\cdot)=\begin{cases}
f_{i}(\cdot) & \mbox{ if }i\in\{1,\dots,r_{1}\}\\
g_{i}(\cdot) & \mbox{ if }i\in\{r_{1}+1,\dots,r_{2}\}\\
\delta_{C_{i}}(\cdot) & \mbox{ if }i\in\{r_{2}+1,\dots,r\},
\end{cases}
\]
so that the objective function in \eqref{eq:primal} can be written
simply as $\frac{1}{2}\|x-x_{0}\|^{2}+h(x)$.

\subsection{Algorithm description and commentary}

The (Fenchel) dual of problem \eqref{eq:primal} is 
\begin{equation}
(D)\quad\beta=\max_{z\in X^{r}}\,\,F(z),\label{eq:dual}
\end{equation}
where $F:X^{r}\to\mathbb{R}$ is defined by 
\begin{equation}
\begin{array}{c}
F(z)=-\frac{1}{2}\left\Vert \left(\underset{i=1}{\overset{r}{\sum}}z_{i}\right)-x_{0}\right\Vert ^{2}-\underset{i=1}{\overset{r}{\sum}}h_{i}^{*}(z_{i})+\frac{1}{2}\|x_{0}\|^{2}.\end{array}\label{eq:formula-F}
\end{equation}
By weak duality, we have $\beta\leq\alpha$. (Actually $\beta=\alpha$
is true; We will see that later.) 

If $\tilde{C}$ is any closed convex set such that $\bar{C}\subset\tilde{C}$,
where the set $\bar{C}$ is defined by 
\begin{equation}
\bar{C}:=[\cap_{i=r_{2}+1}^{r}C_{i}]\cap[\cap_{i=r_{1}+1}^{r_{2}}\mbox{cl dom}\,g_{i}(\cdot)],\label{eq:def-C-bar}
\end{equation}
then problem \eqref{eq:primal} has the same (primal) minimizer as
\begin{equation}
\begin{array}{c}
(P_{\tilde{C}})\quad\alpha=\underset{x\in X}{\min}\frac{1}{2}\|x-x_{0}\|^{2}+\underset{i=1}{\overset{r}{\sum}}h_{i}(x)+\delta_{\tilde{C}}(x).\end{array}\label{eq:SHQP-P}
\end{equation}
The dual of $(P_{\tilde{C}})$ is 
\[
(D_{\tilde{C}})\quad\beta=\max_{z\in X^{r+1}}\,\,F_{\tilde{C}}(z),
\]
where $F_{\tilde{C}}:X^{r+1}\to\mathbb{R}$ is defined by 
\begin{equation}
\begin{array}{c}
F_{\tilde{C}}(z)=-\frac{1}{2}\left\Vert \left(\underset{i=1}{\overset{r+1}{\sum}}z_{i}\right)-x_{0}\right\Vert ^{2}-\underset{i=1}{\overset{r}{\sum}}h_{i}^{*}(z_{i})-\delta_{\tilde{C}}^{*}(z_{r+1})+\frac{1}{2}\|x_{0}\|^{2}.\end{array}\label{eq:SHQP-dual}
\end{equation}

As detailed in \cite{Pang_DBAP}, this observation leads us to construct
a set $\tilde{C}^{n,w}$ that changes in each iteration of our extended
Dykstra's algorithm in Algorithm \ref{alg:Ext-Dyk} below.

\begin{algorithm}
\begin{lyxalgorithm}
\label{alg:Ext-Dyk}(Extended Dykstra's algorithm) Consider the problem
\eqref{eq:primal} along with the associated problems \eqref{eq:dual},
\eqref{eq:SHQP-P} and \eqref{eq:SHQP-dual}. 

Set some number $M\in\mathbb{R}_{+}\cup\{\infty\}$, and let $\bar{w}$
be a positive integer. Our extended Dykstra's algorithm is as follows:

01$\quad$Define the set $H^{1,0}$ to be $H^{1,0}=X$. 

02$\quad$Let $z^{1,0}\in X^{r+1}$ be the starting dual vector for
\eqref{eq:SHQP-dual}, and let $z_{r+1}^{1,0}=0$. 

03$\quad$Let $x^{1,0}$ be $x^{1,0}=x_{0}-\sum_{i=1}^{r+1}z_{i}^{1,0}$.

04$\quad$For $n=1,2,\dots$

05$\quad$$\quad$For $w=1,2,\dots,\bar{w}$

06$\quad$$\quad$$\quad$Choose a subset $S_{n,w}\subset\{1,\dots,r+1\}$.

07$\quad$$\quad$$\quad$If $r+1\in S_{n,w}$, then 

$\phantom{\mbox{07}}$$\quad$$\quad$$\quad$$\quad$\textbf{Dual
decrease with SHQP steps}

08$\quad$$\quad$$\quad$$\quad$Choose $\tilde{C}^{n,w}$ to be
any set such that $\bar{C}\subset\tilde{C}^{n,w}\subset H^{n,w-1}$. 

09$\quad$$\quad$$\quad$$\quad$Let $z_{i}^{n,w}=z_{i}^{n,w-1}$
for all $i\notin S_{n,w}$

10$\quad$$\quad$$\quad$$\quad$Let $\{z_{i}^{n,w}\}_{i\in S_{n,w}}$
be defined through 
\begin{eqnarray}
z^{n,w}= & \underset{z\in X^{r+1}}{\arg\max} & -\frac{1}{2}\left\Vert \left(\sum_{i\in S_{n,w}}z_{i}+\sum_{i\notin S_{n,w}}z_{i}^{n,w-1}\right)-x_{0}\right\Vert ^{2}\nonumber \\
 &  & \qquad-\sum_{i=1}^{r}h_{i}^{*}(z_{i})-\delta_{\tilde{C}^{n,w}}^{*}(z_{r+1})+\frac{1}{2}\|x_{0}\|^{2}.\nonumber \\
 & \mbox{s.t.} & z_{i}^{n,w}=z_{i}^{n,w-1}\mbox{ for all }i\notin S_{n,w}.\label{eq:dual-obj-fn}
\end{eqnarray}

11$\quad$$\quad$$\quad$$\quad$Let $H^{n,w}$ be a set such that
$\delta_{\tilde{C}^{n,w}}^{*}(z_{r+1}^{n,w})=\delta_{H^{n,w}}^{*}(z_{r+1}^{n,w})$
and $\bar{C}\subset H^{n,w}$. 

12$\quad$$\quad$$\quad$Else

$\phantom{\mbox{07}}$$\quad$$\quad$$\quad$$\quad$\textbf{Dual
decrease }

13$\quad$$\quad$$\quad$$\quad$Let $z_{i}^{n,w}=z_{i}^{n,w-1}$
for all $i\notin S_{n,w}$. 

14$\quad$$\quad$$\quad$$\quad$Define $\{z_{i}^{n,w}\}_{i\in S_{n,w}}$
through \eqref{eq:dual-obj-fn}, except with $\delta_{\tilde{C}^{n,w}}^{*}(z_{r+1})$
omitted

15$\quad$$\quad$$\quad$$\quad$Let $H^{n,w}$ be $H^{n,w-1}$. 

16$\quad$$\quad$$\quad$End If 

17$\quad$$\quad$End For 

$\phantom{\mbox{07}}$$\quad$$\quad$\textbf{Aggregating variables}

18$\quad$$\quad$Find $z^{n+1,0}\in X^{r+1}$ and $H^{n+1,0}\supset\bar{C}$
such that \begin{subequations}\label{eq_m:aggregate} 
\begin{eqnarray}
z_{i}^{n+1,0} & = & z_{i}^{n,\bar{w}}\mbox{ for all }i\in\{1,\dots,r_{2}\}\label{eq:aggregate-1}\\
\sum_{i=1}^{r+1}z_{i}^{n+1,0} & = & \sum_{i=1}^{r+1}z_{i}^{n,\bar{w}}\label{eq:aggregate-2}\\
\|z_{i}^{n+1,0}\| & \leq & M\mbox{ for all }i\in\{r_{2}+1,\dots,r\}\label{eq:aggregate-3}\\
\!\!\!\!\sum_{i=r_{2}+1}^{r}\delta_{C_{i}}^{*}(z_{i}^{n+1,0})+\delta_{H^{n+1,0}}^{*}(z_{r+1}^{n+1,0}) & \leq & \sum_{i=r_{2}+1}^{r}\delta_{C_{i}}^{*}(z_{i}^{n,\bar{w}})+\delta_{H^{n,\bar{w}}}^{*}(z_{r+1}^{n,\bar{w}})\label{eq:aggregate-4}\\
\sum_{i=1}^{r+1}\|z_{i}^{n+1,0}\| & \leq & \sum_{i=1}^{r+1}\|z_{i}^{n,\bar{w}}\|.\label{eq:aggregate-5}
\end{eqnarray}
\end{subequations}

19$\quad$End For \end{lyxalgorithm}
\end{algorithm}

We list some observations of Algorithm \ref{alg:Ext-Dyk}. The choice
of $S_{n,w}$ in line 6 of Algorithm \ref{alg:Ext-Dyk} allows for
more than one block of $z$ to be minimized in \eqref{eq:dual-obj-fn}.
If $\bar{w}=r+1$, the sets $S_{n,w}$ are chosen to be $\{w\}$,
and $r_{1}=r_{2}=0$, then Algorithm \ref{alg:Ext-Dyk} reduces to
the extended Dykstra's algorithm that was discussed in \cite{Pang_DBAP}. 
\begin{rem}
(Choice of $H^{n,w}$) An easy choice for $H^{n,w}$ in line 11 of
Algorithm \ref{alg:Ext-Dyk} is to choose a halfspace with outward
normal $z_{r+1}^{n,w}$ that supports the set $\tilde{C}^{n,w}$.
Another example of $H^{n,w}$ is the intersection of the halfspace
mentioned earlier with a small number of halfspaces containing $\bar{C}$
defined in \eqref{eq:def-C-bar} that will allow $H^{n,w}$ to approximate
$\bar{C}$ well.
\end{rem}
We have the following identities to simplify notation: \begin{subequations}\label{eq_m:from-10-13}
\begin{eqnarray}
v^{n,w} & := & \begin{array}{c}
\underset{j=1}{\overset{r+1}{\sum}}z_{j}^{n,w}\end{array}\label{eq:from-10}\\
\mbox{ and }x^{n,w} & := & \begin{array}{c}
x_{0}-v^{n,w}.\end{array}\label{eq:From-13}
\end{eqnarray}
\end{subequations}
\begin{claim}
\label{claim:Fenchel-duality}For all $i\in S_{n,w}$, we have 
\begin{enumerate}
\item [(a)]$-x^{n,w}+\partial h_{i}^{*}(z_{i}^{n,w})\ni0$,
\item [(b)]$-z_{i}^{n,w}+\partial h_{i}(x^{n,w})\ni0$, and
\item [(c)]$h_{i}(x^{n,w})+h_{i}^{*}(z_{i}^{n,w})=\langle x^{n,w},z_{i}^{n,w}\rangle$. 
\end{enumerate}
\end{claim}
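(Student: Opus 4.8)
The plan is to recognize that the update rule \eqref{eq:dual-obj-fn} is an unconstrained maximization over the block $\{z_i\}_{i\in S_{n,w}}$ of a separable-plus-quadratic objective, and to read off the first-order optimality conditions. Concretely, writing $v = \sum_{i\in S_{n,w}} z_i + \sum_{i\notin S_{n,w}} z_i^{n,w-1}$, the objective in \eqref{eq:dual-obj-fn} is $-\tfrac12\|v-x_0\|^2 - \sum_{i=1}^r h_i^*(z_i) - \delta_{\tilde C^{n,w}}^*(z_{r+1}) + \tfrac12\|x_0\|^2$, and since $z^{n,w}$ is a maximizer, for each $i\in S_{n,w}$ the partial subdifferential in the $z_i$ variable must contain $0$. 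The chain/sum rule for convex subdifferentials (valid here because the quadratic term is everywhere finite and smooth) gives $0 \in -(x_0 - v^{n,w}) + \partial h_i^*(z_i^{n,w})$ for $i\le r$, where $v^{n,w} = \sum_{j=1}^{r+1} z_j^{n,w}$ by \eqref{eq:from-10}; since $x^{n,w} = x_0 - v^{n,w}$ by \eqref{eq:From-13}, this is exactly statement (a). (The case $i = r+1$, where $h_{r+1}^* = \delta_{\tilde C^{n,w}}^*$, is handled identically, and in the non-SHQP branch line 14 the same argument applies with $r+1 \notin S_{n,w}$.)

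Next, statement (b) is just the dual/inverse form of (a): for a closed proper convex function $h_i$, the relation $x \in \partial h_i^*(z)$ is equivalent to $z \in \partial h_i(x)$ (equivalently, to $(z,x) \in \gph \partial h_i$ read in the appropriate order). Applying this equivalence to $x = x^{n,w}$ and $z = z_i^{n,w}$ converts (a) into $-z_i^{n,w} + \partial h_i(x^{n,w}) \ni 0$, which is (b). Then statement (c) is the Fenchel--Young equality: for closed proper convex $h_i$, $h_i(x) + h_i^*(z) = \langle x, z\rangle$ holds if and only if $z \in \partial h_i(x)$, which we have just established in (b). So (c) follows immediately. I would present these three facts in the order (a) $\Rightarrow$ (b) $\Rightarrow$ (c), citing a standard reference (e.g. Rockafellar, or \cite{BauschkeCombettes11}) for the subdifferential inversion and Fenchel--Young equality.

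The only genuinely delicate point is justifying the subdifferential calculus used in the first step: one needs that $\partial$ of the sum $(\text{quadratic}) + \sum_i h_i^*(z_i) + \delta_{\tilde C^{n,w}}^*(z_{r+1})$ decomposes into the sum of the individual subdifferentials, restricted to the block $S_{n,w}$. Because the quadratic $z\mapsto -\tfrac12\|(\sum_i z_i) - x_0\|^2$ is finite-valued and differentiable everywhere on $X^{r+1}$, and because the remaining terms are separable across the coordinates $z_1,\dots,z_{r+1}$, the sum rule applies with no constraint qualification needed, and the stationarity condition separates coordinatewise. This is the step I would write out with a little care; everything after it is bookkeeping with standard convex-analytic identities. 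One should also note explicitly that $h_i^*$ is closed proper convex (so that the inversion $x\in\partial h_i^*(z) \Leftrightarrow z\in\partial h_i(x)$ is legitimate), which follows from $h_i$ being closed proper convex by assumptions (A1)--(A3).
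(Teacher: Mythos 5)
Your proposal is correct and follows essentially the same route as the paper: the paper likewise obtains (a) by taking the optimality conditions of \eqref{eq:dual-obj-fn} in the $z_i$-block for $i\in S_{n,w}$, and then invokes the standard equivalence of (a), (b), (c) via subdifferential inversion and the Fenchel--Young equality. Your write-up simply fills in the (routine) justification of the coordinatewise sum rule that the paper leaves implicit.
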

\begin{proof}
By taking the optimality conditions in \eqref{eq:dual-obj-fn} with
respect to $z_{i}$ for $i\in S_{n,w}$, we deduce (a). The equivalences
of (a), (b) and (c) is standard. 
\end{proof}
Dykstra's algorithm is traditionally written in terms of solving for
the primal variable $x$. For completeness, we show the equivalence
between \eqref{eq:dual-obj-fn} and the primal minimization problem. 
\begin{prop}
(On solving \eqref{eq:dual-obj-fn}) If a minimizer $z^{n,w}$ for
\eqref{eq:dual-obj-fn} exists, then the $x^{n,w}$ in \eqref{eq:From-13}
satisfies 
\begin{equation}
x^{n,w}=\begin{array}{c}
\underset{x\in X}{\arg\min}\underset{i\in S_{n,w}}{\sum}h_{i}(x)+\frac{1}{2}\left\Vert x-\left(x_{0}-\underset{i\notin S_{n,w}}{\sum}z_{i}^{n,w}\right)\right\Vert ^{2}.\end{array}\label{eq:primal-subpblm}
\end{equation}
Conversely, if $x^{n,w}$ solves \eqref{eq:primal-subpblm} with the
dual variables $\{\tilde{z}_{i}^{n,w}\}_{i\in S_{n,w}}$ satisfying
\begin{equation}
\begin{array}{c}
\tilde{z}_{i}^{n,w}\in\partial h_{i}(x^{n,w})\mbox{ and }x^{n,w}-x_{0}+\underset{i\notin S_{n,w}}{\sum}z_{i}^{n,w}+\underset{i\in S_{n,w}}{\sum}\tilde{z}_{i}^{n,w}=0,\end{array}\label{eq:primal-optim-cond}
\end{equation}
then $\{\tilde{z}_{i}^{n,w}\}_{i\in S_{n,w}}$ solves \eqref{eq:dual-obj-fn}. \end{prop}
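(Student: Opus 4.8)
The proposed proof is a direct application of Fenchel–Rockafellar duality to the pair consisting of \eqref{eq:primal-subpblm} and \eqref{eq:dual-obj-fn}, so the plan is to verify that the first-order optimality conditions on each side match up. For the forward direction, I would start from the assumed minimizer $z^{n,w}$ of \eqref{eq:dual-obj-fn} and write the stationarity condition in the variables $z_i$, $i\in S_{n,w}$: differentiating the quadratic term produces the common vector $x^{n,w}=x_0-v^{n,w}=x_0-\bigl(\sum_{i\in S_{n,w}}z_i^{n,w}+\sum_{i\notin S_{n,w}}z_i^{n,w}\bigr)$, and differentiating $-h_i^*(z_i)$ (or $-\delta_{\tilde C^{n,w}}^*(z_{r+1})$) gives $x^{n,w}\in\partial h_i^*(z_i^{n,w})$, which is exactly part (a) of Claim~\ref{claim:Fenchel-duality}; by the standard conjugacy equivalence this is the same as $z_i^{n,w}\in\partial h_i(x^{n,w})$. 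Summing the relation $x^{n,w}=x_0-\sum_i z_i^{n,w}$ rearranges to the inclusion $0\in x^{n,w}-\bigl(x_0-\sum_{i\notin S_{n,w}}z_i^{n,w}\bigr)+\sum_{i\in S_{n,w}}z_i^{n,w}$ with $z_i^{n,w}\in\partial h_i(x^{n,w})$, which is precisely the optimality condition for the convex problem \eqref{eq:primal-subpblm} (its objective is strongly convex, so the minimizer is unique and characterized by $0$ lying in the subdifferential). Hence that $x^{n,w}$ is the minimizer of \eqref{eq:primal-subpblm}.

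For the converse direction, I would take $x^{n,w}$ solving \eqref{eq:primal-subpblm} together with multipliers $\tilde z_i^{n,w}\in\partial h_i(x^{n,w})$ satisfying the linear identity in \eqref{eq:primal-optim-cond}, and show the point $\tilde z^{n,w}$ (with the coordinates outside $S_{n,w}$ left equal to $z_i^{n,w-1}$, i.e. equal to $z_i^{n,w}$) is stationary — hence optimal, by concavity — for \eqref{eq:dual-obj-fn}. The inclusion $\tilde z_i^{n,w}\in\partial h_i(x^{n,w})$ converts via conjugacy to $x^{n,w}\in\partial h_i^*(\tilde z_i^{n,w})$, and the linear identity in \eqref{eq:primal-optim-cond} says exactly that $x^{n,w}=x_0-\sum_{i\notin S_{n,w}}z_i^{n,w}-\sum_{i\in S_{n,w}}\tilde z_i^{n,w}$, i.e. that the argument of the quadratic term equals $-x^{n,w}$; combining these two facts gives $0\in -x^{n,w}+\partial h_i^*(\tilde z_i^{n,w})$ for each $i\in S_{n,w}$, which is the stationarity condition of \eqref{eq:dual-obj-fn} obtained in the forward direction. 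Since \eqref{eq:dual-obj-fn} maximizes a concave function over the affine subspace $\{z:z_i=z_i^{n,w-1}\ \forall i\notin S_{n,w}\}$, stationarity on that subspace is sufficient for global optimality.

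The only genuinely delicate point — and the one I would treat carefully — is the handling of the term $\delta_{\tilde C^{n,w}}^*(z_{r+1})$ when $r+1\in S_{n,w}$, since $\delta_{\tilde C^{n,w}}^*$ is not finite-valued and its subdifferential $\partial\delta_{\tilde C^{n,w}}^*(z_{r+1})=\arg\max_{x\in\tilde C^{n,w}}\langle z_{r+1},x\rangle$ is a face of $\tilde C^{n,w}$ rather than a singleton; but the conjugacy equivalences of Claim~\ref{claim:Fenchel-duality} apply verbatim to $h_{r+1}:=\delta_{\tilde C^{n,w}}$ since that claim is already stated for all closed proper convex $h_i$, so the subdifferential-inclusion formulation sidesteps any differentiability worry. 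When $r+1\notin S_{n,w}$ the $(r+1)$-th coordinate is simply held fixed and does not enter the stationarity conditions, so nothing changes. I would also remark briefly that existence of a minimizer $z^{n,w}$ is assumed in the hypothesis and need not be argued, while on the primal side the strong convexity of the $\frac12\|\cdot\|^2$ term guarantees \eqref{eq:primal-subpblm} has a unique solution whenever the subproblem is feasible, so the correspondence between the two optimality systems is exact.
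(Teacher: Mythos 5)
Your proposal is correct and follows essentially the same route as the paper: both directions are handled by translating the block stationarity condition of \eqref{eq:dual-obj-fn} into the conjugacy equivalences of Claim \ref{claim:Fenchel-duality} and then matching it with the (easy inclusion) subdifferential optimality condition for \eqref{eq:primal-subpblm}, exactly as in the paper's proof. Your extra remarks on the support-function term and uniqueness via strong convexity are harmless additions, not a different argument.
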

\begin{proof}
For the first part, note that 
\begin{eqnarray*}
 &  & \begin{array}{c}
\partial\left(h+\frac{1}{2}\|\cdot-(x_{0}-\underset{i\notin S_{n,w}}{\sum}z_{i}^{n,w})\|^{2}\right)(x^{n,w})\end{array}\\
 & \supset & \begin{array}{c}
\underset{i\in S_{n,w}}{\sum}\partial h_{i}(x^{n,w})+[x^{n,w}-(x_{0}-\underset{i\notin S_{n,w}}{\sum}z_{i}^{n,w})]\end{array}\\
 & \overset{\scriptsize\mbox{Claim \ref{claim:Fenchel-duality}(b)}}{\ni} & \begin{array}{c}
\underset{i\in S_{n,w}}{\sum}z_{i}^{n,w}+x^{n,w}-x_{0}+\underset{i\notin S_{n,w}}{\sum}z_{i}^{n,w}\overset{\eqref{eq_m:from-10-13}}{=}0.\end{array}
\end{eqnarray*}
For the second part, note that the first part of \eqref{eq:primal-optim-cond}
implies that $x^{n,w}\in\partial h_{i}^{*}(\tilde{z}_{i}^{n,w})$,
while the second part of \eqref{eq:primal-optim-cond} implies that
$0$ lies in the subdifferential of the objective function in \eqref{eq:dual-obj-fn}. \end{proof}
\begin{rem}
(Information needed to calculate \eqref{eq:dual-obj-fn}) We note
that in \eqref{eq:dual-obj-fn}, one only needs to have knowledge
of the variables $v^{n,w-1}$ and $z_{i}^{n,w-1}$ for $i\in S_{n,w}$.
Thus Dykstra's splitting may be suitable for problems where the communication
costs is high compared to the costs of solving the proximal problems. 
\begin{rem}
(On line 18 of Algorithm \ref{alg:Ext-Dyk}) If $M=\infty$ in Algorithm
\ref{alg:Ext-Dyk}, then $z^{n+1,0}$ and $H^{n+1,0}$ can be set
to be $z^{n,\bar{w}}$ and $H^{n,\bar{w}}$ respectively. We had to
add this line to Algorithm \ref{alg:Ext-Dyk} because the boundedness
condition \eqref{eq:aggregate-3} is necessary for our $O(1/n)$ convergence
result in Section \ref{sec:O-1-n-convergence}. This detail can be
skipped for the discussions in this section and Section \ref{sec:approx-prox}.
\end{rem}
\end{rem}
We need the following fact before we discuss how to find $z^{n+1,0}$
and $H^{n+1,0}$ satisfying \eqref{eq_m:aggregate}.
\begin{fact}
\label{fact:agg-halfspaces}(Aggregating halfspaces) Consider two
halfspaces, say $H_{1}$ and $H_{2}$, which have (outward) normals
$z_{1}$ and $z_{2}$. Assume that $\{z_{1},z_{2}\}$ are linearly
independent. Construct a third halfspace $H_{3}$ with normal $z_{1}+z_{2}$
such that $H_{3}\supset H_{1}\cap H_{2}$ and $\partial H_{3}\cap[H_{1}\cap H_{2}]\neq\emptyset$.
Let $x$ be any point on $\partial H_{1}\cap\partial H_{2}$. We see
that $x\in\partial H_{3}$. We have 
\[
\delta_{H_{1}}^{*}(z_{1})+\delta_{H_{2}}^{*}(z_{2})=\langle z_{1},x\rangle+\langle z_{2},x\rangle=\langle z_{1}+z_{2},x\rangle=\delta_{H_{3}}^{*}(z_{1}+z_{2}).
\]
If $\{z_{1},z_{2}\}$ is linearly dependent instead, then $H_{3}:=H_{1}\cap H_{2}$
is a halfspace, and $\delta_{H_{3}}^{*}(z_{1}+z_{2})\leq\delta_{H_{1}}^{*}(z_{1})+\delta_{H_{2}}^{*}(z_{2})$.
Moreover, the inequality is strict if, for example, $z_{1}\neq0$,
$z_{2}\neq0$ and $H_{1}\subsetneq H_{2}$. This fact can be generalized
for more than two halfspaces.
\end{fact}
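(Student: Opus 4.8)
The plan is to reduce the whole statement to elementary bookkeeping with support functions of halfspaces. First I would fix notation: for a nonzero normal $z_i$ write $H_i=\{y\in X:\langle z_i,y\rangle\le b_i\}$, and record the two elementary facts that drive everything, namely $\delta_{H_i}^*(z_i)=\sup_{y\in H_i}\langle z_i,y\rangle=b_i$, and that this supremum is attained at every point of $\partial H_i$, so that $\langle z_i,x\rangle=\delta_{H_i}^*(z_i)$ whenever $x\in\partial H_i$. The degenerate case $z_i=0$, where $H_i=X$, is immediate and only enters the linearly dependent branch.

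For the linearly independent case I would first note that $\partial H_1\cap\partial H_2\ne\emptyset$, since in a finite-dimensional space two hyperplanes with linearly independent normals always intersect; fix $x$ there. The three conditions imposed on $H_3$ (normal $z_1+z_2$, $H_3\supset H_1\cap H_2$, and $\partial H_3\cap[H_1\cap H_2]\ne\emptyset$) determine it uniquely as $H_3=\{y:\langle z_1+z_2,y\rangle\le\delta_{H_1\cap H_2}^*(z_1+z_2)\}$, so the assertion $x\in\partial H_3$ is precisely the equality $\langle z_1+z_2,x\rangle=\delta_{H_1\cap H_2}^*(z_1+z_2)$. The inequality ``$\le$'' holds because $x\in H_1\cap H_2$; the reverse holds because any $y\in H_1\cap H_2$ satisfies $\langle z_1,y\rangle\le b_1=\langle z_1,x\rangle$ and $\langle z_2,y\rangle\le b_2=\langle z_2,x\rangle$, and adding these gives $\langle z_1+z_2,y\rangle\le\langle z_1+z_2,x\rangle$. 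Substituting $\langle z_i,x\rangle=\delta_{H_i}^*(z_i)$ and $\langle z_1+z_2,x\rangle=\delta_{H_3}^*(z_1+z_2)$ then produces the displayed chain of equalities.

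For the linearly dependent case, after disposing of $z_1=0$ or $z_2=0$, I would write $z_2=\lambda z_1$. Expressing $H_2$ through $\langle z_1,\cdot\rangle$, one sees that $H_1\cap H_2$ is again a halfspace (the smaller of the two, when the normals are positively proportional) whose normal is a nonnegative multiple of $z_1+z_2$, and then $\delta_{H_3}^*(z_1+z_2)$, $\delta_{H_1}^*(z_1)$ and $\delta_{H_2}^*(z_2)$ become explicit expressions in $b_1,b_2,\lambda$; the required inequality $\delta_{H_3}^*(z_1+z_2)\le\delta_{H_1}^*(z_1)+\delta_{H_2}^*(z_2)$ then collapses to something of the form $(1+\lambda)\min\{b_1,b_2/\lambda\}\le b_1+b_2$, which is immediate, and is strict exactly when the two halfspaces are unequal --- covering the stated sufficient condition $z_1,z_2\ne0$, $H_1\subsetneq H_2$. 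The extension to more than two halfspaces follows by grouping the normals two at a time and inducting.

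The only genuine subtlety, and thus the step I would treat most carefully, is the linearly dependent branch: checking that $H_1\cap H_2$ really is a halfspace with a normal compatible with $z_1+z_2$, and handling the anti-parallel subcase $z_1+z_2=0$, where $\delta_{H_3}^*(0)=0$ and the claimed inequality reduces to the mere nonemptiness of $H_1\cap H_2$. Everything else is routine manipulation of suprema that are attained at boundary points.
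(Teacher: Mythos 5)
Your proposal is correct and follows essentially the same route as the paper, which justifies the Fact only by the inline argument you flesh out: take $x\in\partial H_{1}\cap\partial H_{2}$ (nonempty by linear independence of the normals), verify $x\in\partial H_{3}$ via the two defining properties of $H_{3}$, read off the chain of support-function equalities, and treat the linearly dependent case by direct computation with the bounds $b_{1},b_{2},\lambda$. The only looseness --- that for strictly anti-parallel normals with $z_{1}+z_{2}\neq0$ the intersection $H_{1}\cap H_{2}$ is a slab rather than a halfspace --- is inherited from the Fact's own wording and is immaterial to its use in Proposition \ref{prop:aggregate-aim}, where any dependent normals are positively proportional, so your treatment is at least as complete as the paper's.
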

We state some notation necessary for further discussions. For any
$i\in\{1,\dots,r+1\}$ and $n\in\{1,2,\dots\}$, let $p(n,i)$ be
\[
p(n,i)=\max\{m:m\leq\bar{w},i\in S_{n,m}\}.
\]
In other words, $p(n,i)$ is the index $m$ such that $i\in S_{n,m}$
but $i\notin S_{n,k}$ for all $k\in\{m+1,\dots,\bar{w}\}$. It follows
from lines 9 and 13 of Algorithm \ref{alg:Ext-Dyk} that 
\begin{equation}
z_{i}^{n,p(n,i)}=z_{i}^{n,p(n,i)+1}=\cdots=z_{i}^{n,\bar{w}}.\label{eq:stagnant-indices}
\end{equation}
We now show one way to find $z^{n+1,0}$ and $H^{n+1,0}$ satisfying
\eqref{eq:aggregate-4}.
\begin{prop}
\label{prop:aggregate-aim}(On satisfying \eqref{eq:aggregate-4})
Set $z_{i}^{n+1,0}=\alpha_{i}z_{i}^{n,\bar{w}}$ for all $i\in\{r_{2}+1,\dots,r\}$,
where $\alpha_{i}$ is a number in $[0,1]$, so that \eqref{eq:aggregate-3}
is satisfied. Then 
\begin{equation}
z_{r+1}^{n+1,0}\overset{\eqref{eq:aggregate-1},\eqref{eq:aggregate-2}}{=}\sum_{i=r_{2}+1}^{r+1}z_{i}^{n,\bar{w}}-\sum_{i=r_{2}+1}^{r}z_{i}^{n+1,0}=z_{r+1}^{n,\bar{w}}+\sum_{i=r_{2}+1}^{r}(1-\alpha_{i})z_{i}^{n,\bar{w}}.\label{eq:to-get-aggregate-5}
\end{equation}
For $i\in\{r_{2}+1,\dots,r\}$, recall that by the construction of
$z_{i}^{n,p(n,i)}$ in \eqref{eq:dual-obj-fn} and \eqref{eq:primal-subpblm},
the condition \eqref{eq:primal-optim-cond} implies that $\tilde{H}^{n,i}\supset C_{i}$,
where the halfspace $\tilde{H}^{n,i}$ is defined by 
\begin{eqnarray}
\tilde{H}^{n,i} & := & \{x:\langle x-x^{n,p(n,i)},z_{i}^{n,p(n,i)}\rangle\leq0\}\label{eq:derived-halfspace}\\
 & \overset{\eqref{eq:stagnant-indices}}{=} & \{x:\langle x-x^{n,p(n,i)},z_{i}^{n,\bar{w}}\rangle\leq0\}.\nonumber 
\end{eqnarray}
and $x^{n,w}$ is as defined in \eqref{eq:primal-subpblm}. We can
check that 
\[
\delta_{C_{i}}^{*}(\alpha z_{i}^{n,\bar{w}})=\delta_{\tilde{H}^{n,i}}^{*}(\alpha z_{i}^{n,\bar{w}})\mbox{ for any }\alpha\geq0\mbox{ and }i\in\{r_{2}+1,\dots,r\}.
\]
Let $I_{n}\subset\{r_{2}+1,\dots,r\}$ be the set of indices $i$
such that $z_{i}^{n+1,0}\neq z_{i}^{n,\bar{w}}$. Let $H^{n+1,0}$
be the halfspace with outward normal $z_{r+1}^{n+1,0}$ such that
\begin{eqnarray*}
 &  & \begin{array}{c}
H^{n+1,0}\supset H^{n,\bar{w}}\cap\bigcap_{i\in I_{n}}\tilde{H}^{n,i}\end{array}\\
 & \mbox{ and } & \begin{array}{c}
\partial H^{n+1,0}\cap[H^{n,\bar{w}}\cap\bigcap_{i\in I_{n}}\tilde{H}^{n,i}]\neq\emptyset.\end{array}
\end{eqnarray*}
Then \eqref{eq:aggregate-4} is satisfied. Furthermore, \eqref{eq:aggregate-4}
is actually an equality if the normals $\{z_{i}^{n,\bar{w}}\}_{i\in I_{n}\cup\{r+1\}}$
are linearly independent. \end{prop}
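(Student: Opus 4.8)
The plan is to express each term of \eqref{eq:aggregate-4} as a support function of a halfspace and then invoke Fact~\ref{fact:agg-halfspaces} in its multi-halfspace form. The conditions \eqref{eq:aggregate-1}, \eqref{eq:aggregate-2} and \eqref{eq:aggregate-3} hold by the very construction of $z^{n+1,0}$ (and \eqref{eq:aggregate-5} follows with one triangle inequality from \eqref{eq:to-get-aggregate-5}), while $H^{n+1,0}\supset\bar C$ holds because $\bar C\subset C_i\subset\tilde H^{n,i}$ for every $i$ and $\bar C\subset H^{n,\bar w}$, so that $\bar C\subset H^{n,\bar w}\cap\bigcap_{i\in I_n}\tilde H^{n,i}\subset H^{n+1,0}$; hence the only real content is the inequality \eqref{eq:aggregate-4}. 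First I would record two elementary facts: by positive homogeneity of support functions and $\alpha_i\in[0,1]$, $\delta_{C_i}^*(z_i^{n+1,0})=\delta_{C_i}^*(\alpha_i z_i^{n,\bar w})=\alpha_i\delta_{C_i}^*(z_i^{n,\bar w})$; and for $i\notin I_n$ (which forces $\alpha_i=1$ or $z_i^{n,\bar w}=0$) this in fact equals $\delta_{C_i}^*(z_i^{n,\bar w})$, using $\delta_{C_i}^*(0)=0$ in the degenerate subcase.

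Next I would handle $\delta_{H^{n+1,0}}^*(z_{r+1}^{n+1,0})$. For $i\in I_n$ one has $\alpha_i<1$ and $z_i^{n,\bar w}\neq0$, so $\tilde H^{n,i}$ may be described as the halfspace with outward normal $(1-\alpha_i)z_i^{n,\bar w}$. By \eqref{eq:to-get-aggregate-5}, and since $(1-\alpha_i)z_i^{n,\bar w}=0$ when $i\notin I_n$, we have $z_{r+1}^{n+1,0}=z_{r+1}^{n,\bar w}+\sum_{i\in I_n}(1-\alpha_i)z_i^{n,\bar w}$; thus $H^{n+1,0}$ is exactly the aggregate halfspace ($H_3$ in the notation of Fact~\ref{fact:agg-halfspaces}) of $H^{n,\bar w}$, with outward normal $z_{r+1}^{n,\bar w}$, together with the halfspaces $\{\tilde H^{n,i}\}_{i\in I_n}$, with outward normals $\{(1-\alpha_i)z_i^{n,\bar w}\}_{i\in I_n}$. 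Fact~\ref{fact:agg-halfspaces} then gives
\[
\delta_{H^{n+1,0}}^*(z_{r+1}^{n+1,0})\le\delta_{H^{n,\bar w}}^*(z_{r+1}^{n,\bar w})+\sum_{i\in I_n}\delta_{\tilde H^{n,i}}^*\!\left((1-\alpha_i)z_i^{n,\bar w}\right),
\]
with equality when $\{z_{r+1}^{n,\bar w}\}\cup\{(1-\alpha_i)z_i^{n,\bar w}\}_{i\in I_n}$ is linearly independent---equivalently, since $1-\alpha_i>0$, when $\{z_i^{n,\bar w}\}_{i\in I_n\cup\{r+1\}}$ is linearly independent. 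By homogeneity and the identity $\delta_{C_i}^*(\alpha z_i^{n,\bar w})=\delta_{\tilde H^{n,i}}^*(\alpha z_i^{n,\bar w})$ already recorded in the statement, each right-hand summand equals $(1-\alpha_i)\delta_{C_i}^*(z_i^{n,\bar w})$.

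Putting these together, the left-hand side of \eqref{eq:aggregate-4} satisfies
\[
\sum_{i=r_2+1}^{r}\alpha_i\delta_{C_i}^*(z_i^{n,\bar w})+\delta_{H^{n+1,0}}^*(z_{r+1}^{n+1,0})\le\sum_{i=r_2+1}^{r}\alpha_i\delta_{C_i}^*(z_i^{n,\bar w})+\delta_{H^{n,\bar w}}^*(z_{r+1}^{n,\bar w})+\sum_{i\in I_n}(1-\alpha_i)\delta_{C_i}^*(z_i^{n,\bar w});
\]
for $i\in I_n$ the coefficients $\alpha_i$ and $1-\alpha_i$ add to $1$, and for $i\notin I_n$ we already have $\alpha_i\delta_{C_i}^*(z_i^{n,\bar w})=\delta_{C_i}^*(z_i^{n,\bar w})$, so the right-hand side collapses to $\sum_{i=r_2+1}^{r}\delta_{C_i}^*(z_i^{n,\bar w})+\delta_{H^{n,\bar w}}^*(z_{r+1}^{n,\bar w})$, which is exactly \eqref{eq:aggregate-4}. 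The single inequality used is an equality precisely under the linear-independence hypothesis, which yields the final assertion.

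The main obstacle is bookkeeping rather than conceptual depth: one must rescale each $z_i^{n,\bar w}$ by $1-\alpha_i$ when viewing $\tilde H^{n,i}$ as a halfspace, so that the aggregated outward normal comes out to be exactly $z_{r+1}^{n+1,0}$, and one must treat the indices $i\notin I_n$ uniformly (in the degenerate subcase $z_i^{n,\bar w}=0$ the term simply drops since $\delta_{C_i}^*(0)=0$). I would also flag that the equality case implicitly uses that $H^{n,\bar w}$ and $H^{n+1,0}$ are genuine halfspaces with outward normals proportional to $z_{r+1}^{n,\bar w}$ and $z_{r+1}^{n+1,0}$ (the ``easy choice'' of line~11 and the construction here); for the inequality alone one needs only $\langle z_{r+1}^{n,\bar w},x\rangle\le\delta_{H^{n,\bar w}}^*(z_{r+1}^{n,\bar w})$ for $x\in H^{n,\bar w}$, which is automatic from the definition of the support function.
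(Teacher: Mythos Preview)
Your proof is correct and follows exactly the approach the paper intends: the paper's own proof is the single sentence ``The conclusion can be deduced from Fact~\ref{fact:agg-halfspaces},'' and you have simply carried out that deduction in full, rescaling the normals by $1-\alpha_i$, splitting off the $i\notin I_n$ indices, and recombining via positive homogeneity. Your final caveat about $H^{n,\bar w}$ not literally being a halfspace (and why the inequality still goes through using only $\langle z_{r+1}^{n,\bar w},x\rangle\le\delta_{H^{n,\bar w}}^*(z_{r+1}^{n,\bar w})$ for $x\in H^{n,\bar w}$) is a nice observation that the paper glosses over.
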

\begin{proof}
The conclusion can be deduced from Fact \ref{fact:agg-halfspaces}.
\end{proof}
One can check that the construction in Proposition \ref{prop:aggregate-aim}
also leads to the conditions in \eqref{eq_m:aggregate}. In particular,
\eqref{eq:aggregate-5} can be inferred from \eqref{eq:to-get-aggregate-5}
via 
\begin{eqnarray*}
\sum_{i=1}^{r+1}\|z_{i}^{n+1,0}\| & = & \|z_{r+1}^{n+1,0}\|+\sum_{i=1}^{r_{2}}\|z_{i}^{n+1,0}\|+\sum_{i=r_{2}+1}^{r}\|z_{i}^{n+1,0}\|\\
 & \overset{\eqref{eq:to-get-aggregate-5}}{\leq} & \|z_{r+1}^{n,\bar{w}}\|+\sum_{i=1}^{r_{2}}\|z_{i}^{n,\bar{w}}\|+\sum_{i=r_{2}+1}^{r}(\alpha_{i}+(1-\alpha_{i}))\|z_{i}^{n,\bar{w}}\|\\
 & = & \sum_{i=1}^{r+1}\|z_{i}^{n,\bar{w}}\|.
\end{eqnarray*}
 The other items in \eqref{eq_m:aggregate} are clear.

\subsection{Convergence of Algorithm \ref{alg:Ext-Dyk}}

We now prove the convergence of Algorithm \ref{alg:Ext-Dyk}. We first
list assumptions that will ensure convergence to the primal minimizer. 
\begin{assumption}
\label{assu:Assumptions}We make a few assumptions on Algorithm \ref{alg:Ext-Dyk}:
\begin{enumerate}
\item [(a)]The objective value $\alpha$ in \eqref{eq:primal} is a finite
number.
\item [(b)]The sets $S_{n,w}\subset\{1,\dots,r+1\}$ are chosen such that
for all $n$, $\cup_{w=1}^{\bar{w}}S_{n,w}=\{1,\dots,r+1\}$.
\item [(c)]There are constants $A$ and $B$ such that for all $n$, $\sum_{i=1}^{r+1}\|z_{i}^{n,w}\|\leq\sqrt{n}A+B$.
\item [(d)]Minimizers of \eqref{eq:dual-obj-fn} can be obtained in each
step.
\end{enumerate}
\end{assumption}
We give a brief commentary on Assumption \ref{assu:Assumptions}.
Assumption \ref{assu:Assumptions}(a) together with the strong convexity
of the primal problem says that \eqref{eq:primal} is feasible and
a unique primal minimizer exists. As we will see later, the structure
of the functions $f_{i}(\cdot)$ for $i\in\{1,\dots,r_{1}\}$ implies
that $z_{i}^{n,w}$ is uniformly bounded for all $i\in\{1,\dots,r_{1}\}$.
In Proposition \ref{prop:satisfy-sqrt-growth}, we shall introduce
a condition on the choice of $S_{n,w}$ that will ensure that Assumption
\ref{assu:Assumptions}(c) is satisfied. 

 We follow the proof in \cite{Gaffke_Mathar} to show that $\lim_{n\to\infty}x^{n,\bar{w}}$
exists and is the minimizer of (P). 

For any $x\in X$ and $z\in X^{r+1}$, the analogue of \cite[(8)]{Gaffke_Mathar}
is 
\begin{eqnarray}
 &  & \frac{1}{2}\|x_{0}-x\|^{2}+\sum_{i=1}^{r}h_{i}(x)+\delta_{\tilde{C}}(x)-F_{\tilde{C}}(z_{1},\dots,z_{r},z_{r+1})\label{eq:From-8}\\
 & \overset{\eqref{eq:SHQP-dual}}{=} & \frac{1}{2}\|x_{0}-x\|^{2}+\sum_{i=1}^{r}[h_{i}(x)+h_{i}^{*}(z_{i})]-\left\langle x_{0},\sum_{i=1}^{r+1}z_{i}\right\rangle +\frac{1}{2}\left\Vert \sum_{i=1}^{r+1}z_{i}\right\Vert ^{2}\nonumber \\
 &  & +\delta_{\tilde{C}}(x)+\delta_{\tilde{C}}^{*}(z_{r+1})\nonumber \\
 & \overset{\scriptsize\mbox{Fenchel duality}}{\geq} & \frac{1}{2}\|x_{0}-x\|^{2}+\sum_{i=1}^{r+1}\langle x,z_{i}\rangle-\left\langle x_{0},\sum_{i=1}^{r+1}z_{i}\right\rangle +\frac{1}{2}\left\Vert \sum_{i=1}^{r+1}z_{i}\right\Vert ^{2}\nonumber \\
 & = & \frac{1}{2}\left\Vert x_{0}-x-\sum_{i=1}^{r+1}z_{i}\right\Vert ^{2}\geq0.\nonumber 
\end{eqnarray}

The theorem below generalizes \cite[Theorem 1]{Gaffke_Mathar} for
the setting \eqref{eq:primal}.
\begin{thm}
\label{thm:convergence}Suppose Assumption \ref{assu:Assumptions}
holds. For the sequence $\{z^{n,w}\}_{{1\leq n<\infty\atop 0\leq w\leq\bar{w}}}\subset X^{r+1}$
generated by Algorithm \ref{alg:Ext-Dyk} and the sequences $\{v^{n,w}\}_{{1\leq n<\infty\atop 0\leq w\leq\bar{w}}}\subset X$
and $\{x^{n,w}\}_{{1\leq n<\infty\atop 0\leq w\leq\bar{w}}}\subset X$
deduced from \eqref{eq_m:from-10-13}, we have:
\begin{enumerate}
\item [(i)]The sum $\sum_{n=1}^{\infty}\sum_{w=1}^{\bar{w}}\|v^{n,w}-v^{n,w-1}\|^{2}$
is finite and $\{F_{H^{n,\bar{w}}}(z^{n,\bar{w}})\}_{n=1}^{\infty}$
is nondecreasing.
\item [(ii)]There is a constant $C$ such that $\|v^{n,w}\|^{2}\leq C$
for all $n\in\mathbb{N}$ and $w\in\{1,\dots,\bar{w}\}$. 
\item [(iii)]There exists a subsequence $\{v^{n_{k},\bar{w}}\}_{k=1}^{\infty}$
of $\{v^{n,\bar{w}}\}_{n=1}^{\infty}$ which converges to some $v^{*}\in X$
and that 
\[
\lim_{k\to\infty}\langle v^{n_{k},\bar{w}}-v^{n_{k},p(n_{k},i)},z_{i}^{n_{k},\bar{w}}\rangle=0\mbox{ for all }i\in\{1,\dots,r+1\}.
\]

\item [(iv)]For the $v^{*}$ in (iii), $x_{0}-v^{*}$ is the minimizer
of the primal problem (P) and $\lim_{k\to\infty}F_{H^{n_{k},\bar{w}}}(z^{n_{k},\bar{w}})=\frac{1}{2}\|v^{*}\|^{2}+\sum_{i=1}^{r}h_{i}(x_{0}-v^{*})$. 
\end{enumerate}
The properties (i) to (iv) in turn imply that $\lim_{n\to\infty}x^{n,\bar{w}}$
exists, and $x_{0}-v^{*}$ is the primal minimizer of \eqref{eq:primal}.\end{thm}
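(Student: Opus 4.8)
# Proof Proposal

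The plan is to follow the structure of the Gaffke--Mathar argument, adapting each step to the block-coordinate setting of Algorithm \ref{alg:Ext-Dyk}. First I would establish (i). The key observation is that each execution of \eqref{eq:dual-obj-fn} is an exact block maximization of a (concave) quadratic-plus-conjugate objective, so the function value never decreases along the inner loop, and by the strong concavity in the aggregate variable $v = \sum_i z_i$ one gets a Pythagoras-type drop: the increase in the dual objective at step $(n,w)$ dominates $\tfrac{1}{2}\|v^{n,w}-v^{n,w-1}\|^2$. The aggregation step (line 18) is designed via \eqref{eq:aggregate-2} to preserve $v$ and via \eqref{eq:aggregate-4} to not decrease the relevant part of $F_{H^{n,\bar w}}$, so $\{F_{H^{n,\bar w}}(z^{n,\bar w})\}_n$ is nondecreasing across outer iterations too. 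Summing the per-step drops and using that $F$ is bounded above by $\alpha$ (weak duality, via \eqref{eq:From-8} with $x$ the primal minimizer) gives the finite sum. Here I would need to be careful that the SHQP branch (lines 07--11) and the plain branch (lines 12--15) both fit this template; the identity $\delta^*_{\tilde C^{n,w}}(z_{r+1}^{n,w}) = \delta^*_{H^{n,w}}(z_{r+1}^{n,w})$ from line 11 together with $\bar C \subset H^{n,w}$ is what makes $F_{H^{n,w}}(z^{n,w}) = F_{\tilde C^{n,w}}(z^{n,w})$ and keeps everything consistent.

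For (ii), boundedness of $\{v^{n,w}\}$, I would use \eqref{eq:From-8}: plugging $x = x_0 - v^{n,w}$ (so $x^{n,w}$) and $z = z^{n,w}$, the left side is $\tfrac12\|x_0-x\|^2 + h(x) + \delta_{\tilde C}(x) - F_{\tilde C}(z) = \tfrac12\|x_0 - x - v^{n,w}\|^2$-type expression, but more directly: since $F_{H^{n,w}}(z^{n,w}) \ge F_{H^{1,0}}(z^{1,0})$ is a fixed lower bound and $F_{H^{n,w}}(z^{n,w}) = -\tfrac12\|v^{n,w}-x_0\|^2 - \sum_i h_i^*(z_i^{n,w}) - \delta^*_{H^{n,w}}(z_{r+1}^{n,w}) + \tfrac12\|x_0\|^2$, I need the conjugate terms to be bounded below. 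The functions $h_i^*$ are bounded below on bounded sets, and $\delta^*_{C_i}$, $\delta^*_{H^{n,w}}$ are nonnegative-ish after translating by a feasible point of $\bar C$; combined with Assumption \ref{assu:Assumptions}(c) giving $\sum_i\|z_i^{n,w}\| \le \sqrt n A + B$, one concludes $\|v^{n,w}-x_0\|^2$ stays bounded — actually the $\sqrt n$ growth is absorbed because the conjugate lower bounds only grow linearly in $\|z_i\|$ while $\tfrac12\|v^{n,w}-x_0\|^2$ would have to grow faster to violate the fixed lower bound on $F$. This balancing is the delicate computation and I expect it to be the main obstacle: one must track how the $\sqrt n$-allowed growth of the dual norms interacts with the fixed lower bound on the dual objective to pin $v^{n,w}$ into a genuinely bounded set.

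For (iii), Bolzano--Weierstrass on the bounded sequence $\{v^{n,\bar w}\}$ gives a convergent subsequence $v^{n_k,\bar w} \to v^*$. The cross-term limit $\langle v^{n_k,\bar w} - v^{n_k,p(n_k,i)}, z_i^{n_k,\bar w}\rangle \to 0$ follows from (i): the telescoping $v^{n,\bar w} - v^{n,p(n,i)} = \sum_{w=p(n,i)+1}^{\bar w}(v^{n,w}-v^{n,w-1})$ has norm going to $0$ (since the squared increments are summable), and $\|z_i^{n_k,\bar w}\|$ has at most $\sqrt n$ growth, so — wait, that product need not vanish. Instead I would argue more carefully, as in Gaffke--Mathar, using that the per-step objective increase controls not just $\|v^{n,w}-v^{n,w-1}\|^2$ but the full Bregman-type gap, and that the cross term appears with the right sign in the expansion of $F_{H^{n,w}}(z^{n,w}) - F_{H^{n,w-1}}(z^{n,w-1})$; summability of these gaps forces the cross terms to $0$ along the sequence. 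Here I would need Assumption \ref{assu:Assumptions}(b) to ensure every index $i$ is touched within each outer iteration so $p(n,i)$ is well-defined.

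Finally for (iv): set $x^* = x_0 - v^*$. Using Claim \ref{claim:Fenchel-duality}(b) at the last step each index $i$ is updated, $z_i^{n_k,\bar w} = z_i^{n_k,p(n_k,i)} \in \partial h_i(x^{n_k,p(n_k,i)})$; combined with the cross-term limits from (iii) and closedness/convexity, a limiting argument (subgradient inequality passed to the limit) shows $\sum_i z_i^* \in \partial h(x^*)$ where $z_i^*$ are subsequential limits of $z_i^{n_k,\bar w}$ — more precisely one shows $x^*$ satisfies the optimality condition $x_0 - x^* = v^* \in \partial h(x^*)$ for problem (P), i.e. $0 \in x^* - x_0 + \partial h(x^*)$, which by strong convexity identifies $x^*$ as \emph{the} primal minimizer. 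The dual value convergence $F_{H^{n_k,\bar w}}(z^{n_k,\bar w}) \to \tfrac12\|v^*\|^2 + \sum_i h_i(x_0-v^*)$ then follows from strong duality ($\alpha=\beta$), which this very argument establishes, plus continuity. The concluding sentence — that the whole sequence $x^{n,\bar w}$ converges, not just a subsequence — uses that $F_{H^{n,\bar w}}(z^{n,\bar w})$ is monotone (part (i)) and converges along a subsequence to $\beta=\alpha$, hence converges; then the Pythagoras inequality \eqref{eq:From-8} with $x$ the primal minimizer forces $\|x_0 - x_{\min} - v^{n,\bar w}\|^2 \to 0$, so $x^{n,\bar w} = x_0 - v^{n,\bar w} \to x_{\min}$, uniquely determining $v^* = x_0 - x_{\min}$.
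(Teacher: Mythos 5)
Your part (i) and the final whole-sequence argument match the paper, but there are genuine gaps in (ii), (iii) and (iv). For (ii), you flag the interaction between the $\sqrt n$ growth allowed by Assumption \ref{assu:Assumptions}(c) and the fixed lower bound on the dual value as ``the main obstacle'' and leave it unresolved; as sketched it does not close, because lower-bounding the conjugates linearly in the norms only produces a term of size $\sum_i\|z_i^{n,w}\|=O(\sqrt n)$, whereas the quadratic is in $\|v^{n,w}\|$, which can be far smaller than $\sum_i\|z_i^{n,w}\|$ due to cancellation -- that balancing only yields sublinear growth of $\|v^{n,w}\|$, not boundedness. The paper's argument is simpler and uses no growth assumption at all: evaluate every conjugate at the single point $x^*$ (Fenchel--Young), i.e.\ apply \eqref{eq:From-8} with $x=x^*$ and $z=z^{n,w}$, so the linear terms aggregate into $\langle x^*,\sum_i z_i^{n,w}\rangle$, and together with the monotonicity from (i) one gets $\tfrac12\|x_0-x^*-v^{n,w}\|^2\le\tfrac12\|x_0-x^*\|^2+h(x^*)-F_{H^{1,0}}(z^{1,0})$. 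For (iii), you rightly notice that the naive product bound fails, but your substitute (a ``Bregman-type gap'' in which the cross term ``appears with the right sign'' in the dual increase) is unsubstantiated and I do not see that it works: the block update \eqref{eq:dual-obj-fn} only guarantees the drop $\tfrac12\|v^{n,w}-v^{n,w-1}\|^2$, not control of $\langle v^{n,\bar w}-v^{n,p(n,i)},z_i^{n,\bar w}\rangle$. The paper's device is the $\liminf$ trick: summability of $\sum_w\|v^{n,w}-v^{n,w-1}\|^2$ forces $\liminf_{n\to\infty}\sqrt n\,\bigl(\sum_{w=1}^{\bar w}\|v^{n,w}-v^{n,w-1}\|\bigr)=0$ (otherwise a divergent harmonic sum), and Assumption (c)'s bound $A\sqrt n+B$ is calibrated exactly so that along that subsequence the products with $\|z_i^{n_k,\bar w}\|$ vanish.

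For (iv), your plan passes the inclusions $z_i^{n_k,\bar w}\in\partial h_i(x^{n_k,p(n_k,i)})$ to the limit via ``subsequential limits of $z_i^{n_k,\bar w}$'' and concludes $0\in x^*-x_0+\partial h(x^*)$. Without constraint qualifications the dual iterates need not be bounded (a dual minimizer may fail to exist), so such limits need not exist, and moreover $\partial h\neq\sum_i\partial h_i$ in general -- this is precisely the regime features (A)--(C) are designed to cover, so this step is a genuine gap rather than a detail. The paper avoids subgradient limits entirely: in the chain \eqref{eq:biggest-formula} it uses the Fenchel--Young equalities of Claim \ref{claim:Fenchel-duality}(c) at the last update of each block, the cross-term limits from (iii), the feasibility $x_0-v^{n_k,p(n_k,i)}\in C_i$ (hence $x_0-v^*\in C_i$ and $x_0-v^*\in H^{n_k,\bar w}$, so $\delta_{H^{n_k,\bar w}}(x_0-v^*)=0$), and lower semicontinuity of the $h_i$, to show directly that $F_{H^{n_k,\bar w}}(z^{n_k,\bar w})\to\tfrac12\|v^*\|^2+\sum_i h_i(x_0-v^*)$; weak duality \eqref{eq:weak-duality} then sandwiches the values and identifies $x_0-v^*$ as the unique primal minimizer, after which your concluding monotonicity-plus-\eqref{eq:From-8} argument for convergence of the whole sequence is the same as the paper's.
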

\begin{proof}
 We first show that (i) to (iv) implies the final assertion. For
all $n\in\mathbb{N}$ we have, from weak duality, 
\begin{equation}
\begin{array}{c}
F_{H^{n,\bar{w}}}(z^{n,\bar{w}})\leq\beta\leq\alpha\leq\frac{1}{2}\|x_{0}-(x_{0}-v^{*})\|^{2}+\underset{i=1}{\overset{r}{\sum}}h_{i}(x_{0}-v^{*}),\end{array}\label{eq:weak-duality}
\end{equation}
hence $\beta=\alpha=\frac{1}{2}\|x_{0}-(x_{0}-v^{*})\|^{2}+h(x_{0}-v^{*})$,
and that $x_{0}-v^{*}=\arg\min_{x}h(x)+\frac{1}{2}\|x-x_{0}\|^{2}$.
Since the values $\{F_{H^{n,\bar{w}}}(z^{n,\bar{w}})\}_{n=1}^{\infty}$
are nondecreasing in $n$, we have 
\[
\begin{array}{c}
\underset{n\to\infty}{\lim}F_{H^{n,\bar{w}}}(z^{n,\bar{w}})=\frac{1}{2}\|x_{0}-(x_{0}-v^{*})\|^{2}+\underset{i=1}{\overset{r}{\sum}}h_{i}(x_{0}-v^{*}),\end{array}
\]
and (substituting $x=x_{0}-v^{*}$ in \eqref{eq:From-8}) 
\begin{eqnarray*}
 &  & \begin{array}{c}
\frac{1}{2}\|x_{0}-(x_{0}-v^{*})\|^{2}+h(x_{0}-v^{*})-F_{H^{n,\bar{w}}}(z^{n,\bar{w}})\end{array}\\
 & \overset{\eqref{eq:From-8},\eqref{eq:from-10}}{\geq} & \begin{array}{c}
\frac{1}{2}\|x_{0}-(x_{0}-v^{*})-v^{n,\bar{w}}\|^{2}\end{array}\\
 & \overset{\eqref{eq:From-13}}{=} & \begin{array}{c}
\frac{1}{2}\|x^{n,\bar{w}}-(x_{0}-v^{*})\|^{2}.\end{array}
\end{eqnarray*}
Hence $\lim_{n\to\infty}x^{n,\bar{w}}$ is the minimizer in (P). 

It remains to prove assertions (i) to (iv).

\textbf{Proof of (i):} We note that if $r+1\in S_{n,w}$, then 
\begin{eqnarray}
F_{H^{n,w-1}}(z^{n,w-1}) & \overset{\tilde{C}^{n,w}\subset H^{n,w-1}}{\leq} & \begin{array}{c}
F_{\tilde{C}^{n,w}}(z^{n,w-1})\end{array}\label{eq:SHQP-decrease}\\
 & \overset{\eqref{eq:dual-obj-fn},\eqref{eq:from-10}}{\leq} & \begin{array}{c}
F_{\tilde{C}^{n,w}}(z^{n,w})-\frac{1}{2}\|v^{n,w}-v^{n,w-1}\|^{2}\end{array}\nonumber \\
 & \overset{\scriptsize\mbox{Alg \ref{alg:Ext-Dyk}, line 11}}{=} & \begin{array}{c}
F_{H^{n,w}}(z^{n,w})-\frac{1}{2}\|v^{n,w}-v^{n,w-1}\|^{2}.\end{array}\nonumber 
\end{eqnarray}

The first inequality comes from the fact that since $\tilde{C}^{n,w}\subset H^{n,w-1}$
(from line 8 of Algorithm \ref{alg:Ext-Dyk}), then $\delta_{\tilde{C}^{n,w}}^{*}(\cdot)\leq\delta_{H^{n,w-1}}^{*}(\cdot)$.
The second inequality comes from the fact that $\{z_{i}^{n,w}\}_{i\in S_{n,w}}$
is a minimizer of the mapping 
\[
\begin{array}{c}
\{z_{i}\}_{i\in S_{n,w}}\mapsto\underset{i\in S_{n,w}}{\sum}h_{i}^{*}(z_{i})+\frac{1}{2}\left\Vert \left(\underset{i\in S_{n,w}}{\sum}z_{i}\right)-\left(x_{0}-\underset{i\notin S_{n,w}}{\sum}z_{i}^{n,w}\right)\right\Vert ^{2},\end{array}
\]
with the $\frac{1}{2}\|v^{n,w}-v^{n,w-1}\|^{2}$ arising from the
quadratic term.

When $r+1\notin S_{n,w}$, then we can make use of the fact that $z_{r+1}^{n,w}=z_{r+1}^{n,w-1}$
and $\tilde{C}^{n,w}=H^{n,w-1}=H^{n,w}$ to see that the inequality
\eqref{eq:SHQP-decrease} carries through as well. 

Recall that through \eqref{eq:aggregate-4}, $F_{H^{n+1,0}}(z^{n+1,0})\geq F_{H^{n,\bar{w}}}(z^{n,\bar{w}})$.
Combining \eqref{eq:SHQP-decrease} over all $m\in\{1,\dots,n\}$
and $w\in\{1,\dots,\bar{w}\}$, we have 
\[
\begin{array}{c}
F_{H^{1,0}}(z^{1,0})+\underset{m=1}{\overset{n}{\sum}}\underset{w=1}{\overset{\bar{w}}{\sum}}\|v^{m,w}-v^{m,w-1}\|^{2}\leq F_{H^{n,\bar{w}}}(z^{n,\bar{w}}).\end{array}
\]
Next, $F_{H^{n,\bar{w}}}(z^{n,\bar{w}})\leq\alpha$ by weak duality.
The proof of the claim is complete.

\textbf{Proof of (ii):} Substituting $x$ in \eqref{eq:From-8} to
be the primal minimizer $x^{*}$ and $z$ to be $z^{n,w}$, we have
\begin{eqnarray*}
 &  & \begin{array}{c}
\frac{1}{2}\|x_{0}-x^{*}\|^{2}+\underset{i=1}{\overset{r}{\sum}}h_{i}(x^{*})-F_{H^{1,0}}(z^{1,0})\end{array}\\
 & \overset{\scriptsize\mbox{part (i)}}{\geq} & \begin{array}{c}
\frac{1}{2}\|x_{0}-x^{*}\|^{2}+\underset{i=1}{\overset{r}{\sum}}h_{i}(x^{*})-F_{H^{n,w}}(z^{n,w})\end{array}\\
 & \overset{\eqref{eq:From-8}}{\geq} & \begin{array}{c}
\frac{1}{2}\left\Vert x_{0}-x^{*}-\underset{i=1}{\overset{r+1}{\sum}}z_{i}^{n,w}\right\Vert ^{2}\overset{\eqref{eq:from-10}}{=}\frac{1}{2}\|x_{0}-x^{*}-v^{n,w}\|^{2}.\end{array}
\end{eqnarray*}
The conclusion is immediate.

\textbf{Proof of (iii): } We first make use of the technique in
\cite[Lemma 29,1]{BauschkeCombettes11} (which is in turn largely
attributed to \cite{BD86}) to show that 
\begin{equation}
\begin{array}{c}
\underset{n\to\infty}{\liminf}\left[\left(\underset{w=1}{\overset{\bar{w}}{\sum}}\|v^{n,w}-v^{n,w-1}\|\right)\sqrt{n}\right]=0.\end{array}\label{eq:root-n-dec}
\end{equation}
Seeking a contradiction, suppose instead that there is an $\epsilon>0$
and $\bar{n}>0$ such that if $n>\bar{n}$, then $\left(\sum_{w=1}^{\bar{w}}\|v^{n,w}-v^{n,w-1}\|\right)\sqrt{n}>\epsilon$.
By the Cauchy Schwarz inequality, we have $\begin{array}{c}
\frac{\epsilon^{2}}{n}<\left(\underset{w=1}{\overset{\bar{w}}{\sum}}\|v^{n,w}-v^{n,w-1}\|\right)^{2}\leq\bar{w}\underset{w=1}{\overset{\bar{w}}{\sum}}\|v^{n,w}-v^{n,w-1}\|^{2}.\end{array}$ This contradicts the earlier claim in (i) that $\sum_{n=1}^{\infty}\sum_{w=1}^{\bar{w}}\|v^{n,w}-v^{n,w-1}\|^{2}$
is finite. 

Next, we recall Assumption \ref{assu:Assumptions}(c) that there are
constants $A$ and $B$ such that $\sum_{i=1}^{r+1}\|z_{i}^{n,\bar{w}}\|\leq A\sqrt{n}+B$
for all $n$. Through \eqref{eq:root-n-dec}, we find a sequence $\{n_{k}\}_{k=1}^{\infty}$
such that $\lim_{k\to\infty}\left[\left(\sum_{w=1}^{\bar{w}}\|v^{n_{k},w}-v^{n_{k},w-1}\|\right)\sqrt{n_{k}}\right]=0$.
Thus 
\begin{equation}
\begin{array}{c}
\underset{k\to\infty}{\lim}\left[\left(\underset{w=1}{\overset{\bar{w}}{\sum}}\|v^{n_{k},w}-v^{n_{k},w-1}\|\right)\|z_{i}^{n_{k},\bar{w}}\|\right]=0\mbox{ for all }i\in\{1,\dots,r+1\}.\end{array}\label{eq:lim-sum-norm-z}
\end{equation}
 Moreover, 
\begin{eqnarray}
|\langle v^{n_{k},\bar{w}}-v^{n_{k},p(n_{k},i)},z_{i}^{n_{k},\bar{w}}\rangle| & \leq & \begin{array}{c}
\|v^{n_{k},\bar{w}}-v^{n_{k},p(n_{k},i)}\|\|z_{i}^{n_{k},\bar{w}}\|\end{array}\label{eq:inn-pdt-sum-norm}\\
 & \leq & \begin{array}{c}
\left(\underset{w=1}{\overset{\bar{w}}{\sum}}\|v^{n_{k},w}-v^{n_{k},w-1}\|\right)\|z_{i}^{n_{k},\bar{w}}\|.\end{array}\nonumber 
\end{eqnarray}
By (ii), there exists a further subsequence of $\{v^{n_{k},\bar{w}}\}_{k=1}^{\infty}$
which converges to some $v^{*}\in X$. Combining \eqref{eq:lim-sum-norm-z}
and \eqref{eq:inn-pdt-sum-norm} gives (iii).

\textbf{Proof of (iv):}  From earlier results, we obtain 
\begin{eqnarray}
 &  & \begin{array}{c}
-\underset{i=1}{\overset{r}{\sum}}h_{i}(x_{0}-v^{*})-\delta_{H^{n_{k},\bar{w}}}(x_{0}-v^{*})\end{array}\label{eq:biggest-formula}\\
 & \overset{\eqref{eq:From-8}}{\leq} & \begin{array}{c}
\frac{1}{2}\|x_{0}-(x_{0}-v^{*})\|^{2}-F_{H^{n_{k},\bar{w}}}(z^{n_{k},\bar{w}})\end{array}\nonumber \\
 & \overset{\scriptsize\mbox{Alg \ref{alg:Ext-Dyk} line 15}}{=} & \begin{array}{c}
\frac{1}{2}\|x_{0}-(x_{0}-v^{*})\|^{2}-F_{H^{n_{k},p(n_{k},r+1)}}(z^{n_{k},\bar{w}})\end{array}\nonumber \\
 & \overset{\eqref{eq:SHQP-dual},\eqref{eq:stagnant-indices}}{=} & \begin{array}{c}
\frac{1}{2}\|v^{*}\|^{2}+\underset{i=1}{\overset{r}{\sum}}h_{i}^{*}(z_{i}^{n_{k},p(n_{k},i)})+\delta_{H^{n_{k},p(n_{k},r+1)}}^{*}(z_{r+1}^{n_{k},\bar{w}})\end{array}\nonumber \\
 &  & \begin{array}{c}
-\langle x_{0},v^{n_{k},\bar{w}}\rangle+\frac{1}{2}\|v^{n_{k},\bar{w}}\|^{2}\end{array}\nonumber \\
 & \overset{\scriptsize\mbox{Claim \ref{claim:Fenchel-duality}(c)},i\in S_{n,p(n,i)}}{=} & \begin{array}{c}
\frac{1}{2}\|v^{*}\|^{2}+\underset{i=1}{\overset{r+1}{\sum}}\langle x_{0}-v^{n_{k},p(n_{k},i)},z_{i}^{n_{k},p(n_{k},i)}\rangle\end{array}\nonumber \\
 &  & \begin{array}{c}
-\underset{i=1}{\overset{r}{\sum}}h_{i}(x_{0}-v^{n_{k},p(n_{k},i)})-\langle x_{0},v^{n_{k},\bar{w}}\rangle+\frac{1}{2}\|v^{n_{k},\bar{w}}\|^{2}\end{array}\nonumber \\
 & \overset{\eqref{eq:stagnant-indices}}{=} & \begin{array}{c}
\frac{1}{2}\|v^{*}\|^{2}-\underset{i=1}{\overset{r+1}{\sum}}\langle v^{n_{k},p(n_{k},i)}-v^{n_{k},\bar{w}},z_{i}^{n_{k},\bar{w}}\rangle\end{array}\nonumber \\
 &  & \begin{array}{c}
-\underset{i=1}{\overset{r}{\sum}}h_{i}(x_{0}-v^{n_{k},p(n_{k},i)})-\underset{i=1}{\overset{r+1}{\sum}}\langle v^{n_{k},\bar{w}},z_{i}^{n_{k},\bar{w}}\rangle+\frac{1}{2}\|v^{n_{k},\bar{w}}\|^{2}\end{array}\nonumber \\
 & \overset{\eqref{eq:from-10}}{=} & \begin{array}{c}
\frac{1}{2}\|v^{*}\|^{2}-\frac{1}{2}\|v^{n_{k},\bar{w}}\|^{2}-\underset{i=1}{\overset{r+1}{\sum}}\langle v^{n_{k},p(n_{k},i)}-v^{n_{k},\bar{w}},z_{i}^{n_{k},\bar{w}}\rangle\end{array}\nonumber \\
 &  & \begin{array}{c}
-\underset{i=1}{\overset{r}{\sum}}h_{i}(x_{0}-v^{n_{k},p(n_{k},i)}).\end{array}\nonumber 
\end{eqnarray}
Since $\lim_{k\to\infty}v^{n_{k},\bar{w}}=v^{*}$, we have $\lim_{k\to\infty}\frac{1}{2}\|v^{*}\|^{2}-\frac{1}{2}\|v^{n_{k},\bar{w}}\|^{2}=0$.
The term $\sum_{i=1}^{r+1}\langle v^{n_{k},p(n_{k},i)}-v^{n_{k},\bar{w}},z_{i}^{n_{k},\bar{w}}\rangle$
converges to 0 by (iii). Next, recall from \eqref{eq:primal-subpblm}
that $x_{0}-v^{n_{k},p(n_{k},i)}\in C_{i}$. Recall from the end of
the proof of (iii) that $x_{0}-v^{*}=\lim_{k\to\infty}x_{0}-v^{n_{k},p(n_{k},i)}$,
so $x_{0}-v^{*}\in C_{i}$. Hence $x_{0}-v^{*}\in\cap_{i=r_{2}+1}^{r}C_{i}$.
Since $H^{n_{k},\bar{w}}$ was designed so that $\cap_{i=r_{2}+1}^{r}C_{i}\subset H^{n_{k},\bar{w}}$,
we have $x_{0}-v^{*}\in H^{n_{k},\bar{w}}$, so $\delta_{H^{n_{k},\bar{w}}}(x_{0}-v^{*})=0$.
Lastly, by the lower semicontinuity of $h_{i}(\cdot)$, we have 
\[
-\lim_{k\to\infty}\sum_{i=1}^{r}h_{i}(x_{0}-v^{n_{k},p(n_{k},i)})\leq-\sum_{i=1}^{r}h_{i}(x_{0}-v^{*}).
\]
Therefore \eqref{eq:biggest-formula} becomes an equation in the limit,
which leads to $\lim_{k\to\infty}F_{H^{n_{k},\bar{w}}}(z^{n_{k},\bar{w}})=\frac{1}{2}\|v^{*}\|^{2}+\sum_{i=1}^{r}h_{i}(x_{0}-v^{*})$.

\end{proof}
We now show some reasonable conditions that guarantee Assumption \ref{assu:Assumptions}(c).
\begin{prop}
\label{prop:satisfy-sqrt-growth}(Satisfying Assumption \ref{assu:Assumptions}(c))
Assumption \ref{assu:Assumptions}(c) is satisfied when all of the
following conditions on $S_{n,w}$ hold:
\begin{enumerate}
\item There are only finitely many $S_{n,w}$ for which $S_{n,w}\cap\{r_{1}+1,\dots,r+1\}$
contains more than one element.
\item There are constants $M_{1}>0$ and $M_{2}>0$ such that the size of
the set 
\[
\big\{(m,w):m\leq n,\,w\in\{1,\dots,\bar{w}\},\,|S_{m,w}|>1\big\}
\]
is bounded by $M_{1}\sqrt{n}+M_{2}$ for all $n$. 
\end{enumerate}
\end{prop}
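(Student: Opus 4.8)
The plan is to bound $\sum_{i=1}^{r+1}\|z_i^{n,w}\|$ by splitting it into the contribution of the blocks $\{1,\dots,r_1\}$, which I will show stays bounded by an absolute constant, and the contribution of the remaining blocks $\{r_1+1,\dots,r+1\}$, which I will show grows no faster than $\sqrt n$. The proofs of Theorem~\ref{thm:convergence}(i) and (ii) do not use Assumption~\ref{assu:Assumptions}(c), so I may already take for granted that $V:=\sum_{n,w}\|v^{n,w}-v^{n,w-1}\|^2<\infty$ and that $\|v^{n,w}\|\le D$ for some constant $D$; in particular every iterate $x^{n,w}=x_0-v^{n,w}$ lies in the fixed ball $\{x:\|x-x_0\|\le D\}$.

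For $i\in\{1,\dots,r_1\}$ I would argue that, since $f_i$ is real-valued and convex on all of $X$, it is locally Lipschitz, so its subdifferential is bounded by some $L_i$ on a neighborhood of $\{x:\|x-x_0\|\le D\}$. When $i\in S_{n,w}$, Claim~\ref{claim:Fenchel-duality}(b) gives $z_i^{n,w}\in\partial f_i(x^{n,w})$, hence $\|z_i^{n,w}\|\le L_i$; when $i\notin S_{n,w}$, the vector $z_i^{n,w}$ is copied from $z_i^{n,w-1}$ (lines 9 and 13), and \eqref{eq:aggregate-1} copies it across the aggregation step because $i\le r_2$. By induction $\|z_i^{n,w}\|\le L_i':=\max\{\|z_i^{1,0}\|,L_i\}$ for all $n,w$, so $\sum_{i=1}^{r_1}\|z_i^{n,w}\|\le B_1:=\sum_{i=1}^{r_1}L_i'$.

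It remains to control $T^{n,w}:=\sum_{i=r_1+1}^{r+1}\|z_i^{n,w}\|$. First, the aggregation step cannot increase it: \eqref{eq:aggregate-1} keeps $z_i^{n+1,0}=z_i^{n,\bar w}$ for $i\in\{r_1+1,\dots,r_2\}$, and subtracting $\sum_{i=1}^{r_2}\|z_i^{n,\bar w}\|$ from both sides of \eqref{eq:aggregate-5} leaves $\sum_{i=r_2+1}^{r+1}\|z_i^{n+1,0}\|\le\sum_{i=r_2+1}^{r+1}\|z_i^{n,\bar w}\|$, so $T^{n+1,0}\le T^{n,\bar w}$. In a minimization step only the blocks in $S_{n,w}$ move, and $\sum_{i\in S_{n,w}}(z_i^{n,w}-z_i^{n,w-1})=v^{n,w}-v^{n,w-1}$. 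When $S_{n,w}$ contains at most one index $i_1$ from $\{r_1+1,\dots,r+1\}$, I can isolate the displacement of $i_1$ from this identity: the remaining moving blocks all lie in $\{1,\dots,r_1\}$, so their total displacement has norm at most $2B_1$ by the previous paragraph, and therefore $T^{n,w}-T^{n,w-1}\le\|v^{n,w}-v^{n,w-1}\|+2B_1$, the $2B_1$ being needed only when $|S_{n,w}|>1$. By condition~(1) there is a finite $N_1$ beyond which every step has this property, and the finitely many numbers $T^{n,w}$ with $n\le N_1$ are bounded by some constant $B_3$. Summing the last estimate over $w$ and unrolling over iterations with $T^{m,0}\le T^{m-1,\bar w}$, I get, for $n>N_1$,
\[
T^{n,w}\ \le\ B_3+\sum_{m=1}^{n}\sum_{w'=1}^{\bar w}\|v^{m,w'}-v^{m,w'-1}\|\ +\ 2B_1\,\big|\{(m,w'):m\le n,\ w'\le\bar w,\ |S_{m,w'}|>1\}\big|.
\]
Here the middle term is at most $\sqrt{\bar w\,V}\,\sqrt n$ by the Cauchy--Schwarz inequality (using $V<\infty$), and the cardinality is at most $M_1\sqrt n+M_2$ by condition~(2); hence $T^{n,w}\le A\sqrt n+B_2$ for suitable constants $A,B_2$. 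Combined with $\sum_{i=1}^{r_1}\|z_i^{n,w}\|\le B_1$, this is Assumption~\ref{assu:Assumptions}(c).

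The step I expect to be the crux is this one-step estimate for $T^{n,w}$, and it is where both conditions enter: the blocks with index greater than $r_1$ admit no intrinsic norm bound (for $i>r_2$ they are normal-cone vectors, for $r_1<i\le r_2$ they are subgradients of a possibly unbounded $g_i$), so their size can be tracked only through the telescoping identity above, and that identity is useless once two such blocks move together --- exactly what condition~(1) excludes after finitely many steps. Once at most one such block moves per step, condition~(2) is precisely what prevents the slack $2B_1$ produced by the smooth blocks that happen to be updated simultaneously from accumulating faster than $O(\sqrt n)$.
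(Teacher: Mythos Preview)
Your proof is correct and follows essentially the same approach as the paper: bound the blocks $i\le r_1$ uniformly via the local Lipschitz property of $f_i$ (using Theorem~\ref{thm:convergence}(ii)), then control the remaining blocks through the identity $\sum_{i\in S_{n,w}}(z_i^{n,w}-z_i^{n,w-1})=v^{n,w}-v^{n,w-1}$, applying Cauchy--Schwarz with Theorem~\ref{thm:convergence}(i) to get the $\sqrt n$ term and condition~(2) to count the multi-block steps. The only differences are organizational: the paper telescopes the full sum $\sum_{i=1}^{r+1}\|z_i^{n,\bar w}\|$ and splits the increments according to whether $|S_{m,w}|=1$, whereas you isolate $T^{n,w}$ first and then telescope; and the paper reduces to the case where condition~(1) holds for every step, while you absorb the finitely many exceptional steps into the constant $B_3$.
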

\begin{proof}
We only need to prove this result for when only condition (2) holds
and $S_{n}\cap\{r_{1}+1,\dots,r+1\}$ always contains at most one
element. We have 
\begin{eqnarray}
\sum_{i=1}^{r+1}\|z_{i}^{n,\bar{w}}\| & \leq & \sum_{i=1}^{r+1}\|z_{i}^{n,0}\|+\sum_{i=1}^{r+1}\sum_{w=1}^{\bar{w}}\|z_{i}^{n,w}-z_{i}^{n,w-1}\|\label{eq:parts}\\
 & \overset{\eqref{eq:aggregate-5}}{\leq} & \sum_{i=1}^{r+1}\|z_{i}^{n-1,\bar{w}}\|+\sum_{i=1}^{r+1}\sum_{w=1}^{\bar{w}}\|z_{i}^{n,w}-z_{i}^{n,w-1}\|.\nonumber 
\end{eqnarray}
Hence 
\[
\sum_{i=1}^{r+1}\|z_{i}^{n,\bar{w}}\|\overset{\eqref{eq:parts}}{\leq}\sum_{i=1}^{r+1}\|z_{i}^{1,0}\|+\sum_{m=1}^{n}\sum_{i=1}^{r+1}\sum_{w=1}^{\bar{w}}\|z_{i}^{n,w}-z_{i}^{n,w-1}\|.
\]
So it suffices to show that there are numbers $A'$ and $B'$ such
that 
\begin{equation}
\sum_{m=1}^{n}\sum_{i=1}^{r+1}\sum_{w=1}^{\bar{w}}\|z_{i}^{m,w}-z_{i}^{m,w-1}\|\leq A'\sqrt{n}+B'.\label{eq:growth-target}
\end{equation}
The sum of the left hand side of \eqref{eq:growth-target} can be
written as 
\begin{equation}
\sum_{(m,w)\in\bar{S}_{n,1}}\sum_{i=1}^{r+1}\|z_{i}^{m,w}-z_{i}^{m,w-1}\|+\sum_{(m,w)\in\bar{S}_{n,2}}\sum_{i=1}^{r+1}\|z_{i}^{m,w}-z_{i}^{m,w-1}\|,\label{eq:break-z-i}
\end{equation}
where\begin{subequations} 
\begin{eqnarray}
\bar{S}_{n,1} & = & \big\{(m,w):|S_{m,w}|=1,\,m\leq n,\,w\in\{1,\dots,\bar{w}\}\big\},\label{eq:S-bar-1}\\
\mbox{ and }\bar{S}_{n,2} & = & \big\{(m,w):|S_{m,w}|>1,\,m\leq n,\,w\in\{1,\dots,\bar{w}\}\big\}.\label{eq:S-bar-2}
\end{eqnarray}
\end{subequations}First, there is a constant $M_{3}$ such that 
\begin{eqnarray}
 &  & \sum_{(m,w)\in\bar{S}_{n,1}}\sum_{i=1}^{r+1}\|z_{i}^{m,w}-z_{i}^{m,w-1}\|\label{eq:2nd-sum}\\
 & \overset{\scriptsize|S_{n,w}|=1\mbox{ in }\eqref{eq:S-bar-1},\eqref{eq:from-10}}{=} & \sum_{(m,w)\in\bar{S}_{n,1}}\|v^{m,w}-v^{m,w-1}\|\nonumber \\
 & \overset{\eqref{eq:S-bar-1}}{\leq} & \sum_{w=1}^{\bar{w}}\sum_{m=1}^{n}\|v^{m,w}-v^{m,w-1}\|\nonumber \\
 & \leq & \sqrt{\bar{w}n}\sqrt{\sum_{w=1}^{\bar{w}}\sum_{m=1}^{n}\|v^{m,w}-v^{m,w-1}\|^{2}}\nonumber \\
 & \overset{\scriptsize\mbox{Thm \ref{thm:convergence}(i)}}{\leq} & \sqrt{n}M_{3}.\nonumber 
\end{eqnarray}
Next, we estimate the second sum in \eqref{eq:break-z-i}. For each
$(m,w)\in\bar{S}_{n,2}$, by condition (1), there is a unique $i_{m,w}\in S_{m,w}\cap\{r_{1}+1,\dots,r+1\}$.
We have 
\begin{equation}
z_{i_{m,w}}^{m,w}-z_{i_{m,w}}^{m,w-1}\overset{\eqref{eq:dual-obj-fn},\eqref{eq:from-10}}{=}v_{i_{m,w}}^{m,w}-v_{i_{m,w}}^{m,w-1}-\sum_{j\in S_{m,w}\backslash\{i_{m,w}\}}(z_{j}^{m,w}-z_{j}^{m,w-1}).\label{eq:bound-z-i}
\end{equation}
For each $j\in S_{m,w}\backslash\{i_{m,w}\}$, we have 
\[
z_{j}^{m,w}\overset{\scriptsize\mbox{Claim \ref{claim:Fenchel-duality}(b)}}{\in}\partial f_{j}(x^{m,w})\overset{\eqref{eq:From-13}}{=}\partial f_{j}(x_{0}-v^{m,w}).
\]
Together with the fact that $v^{m,w}$ is bounded from Theorem \ref{thm:convergence}(ii)
and the fact that $f_{j}(\cdot)$ are Lipschitz on bounded domains,
we deduce that $z_{j}^{m,w}$ and $z_{j}^{m,w-1}$ are bounded for
all $j\in\{1,\dots,r_{1}\}$ by standard convex analysis. Since $S_{m,w}\backslash\{i_{m,w}\}\subset\{1,\dots,r_{1}\}$,
every term on the right hand side of \eqref{eq:bound-z-i} is bounded,
so there is a constant $M_{4}>0$ such that $\|z_{i}^{m}-z_{i}^{m-1}\|\leq M_{4}$.
Therefore condition (2) implies 
\begin{equation}
\sum_{(m,w)\in\bar{S}_{n,2}}\sum_{i=1}^{r+1}\|z_{i}^{m,w}-z_{i}^{m,w-1}\|\leq M_{4}(M_{1}\sqrt{n}+M_{2}).\label{eq:1st-sum}
\end{equation}
Combining \eqref{eq:2nd-sum} and \eqref{eq:1st-sum} into \eqref{eq:break-z-i}
gives the conclusion we need.
\end{proof}

\section{\label{sec:O-1-n-convergence}$O(1/n)$ convergence when a dual minimizer
exists}

In this section, we show that for the problem \eqref{eq:primal},
if Algorithm \ref{alg:Ext-Dyk} is applied with some finite $M$ and
a minimizer for the dual problem exists, then the rate of convergence
of the dual objective function is $O(1/n)$, which leads to the $O(1/\sqrt{n})$
rate of convergence to the primal minimizer.

We recall a lemma on the convergence rates of sequences.
\begin{lem}
\label{lem:seq-conv-rate}(Sequence convergence rate) Let $\alpha>0$.
Suppose the sequence of nonnegative numbers $\{a_{k}\}_{k=0}^{\infty}$
is such that 
\[
a_{k}\geq a_{k+1}+\alpha a_{k+1}^{2}\mbox{ for all }k\in\{1,2,\dots\}.
\]
\end{lem}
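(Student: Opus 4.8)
The plan is to pass to the reciprocal sequence $b_{k}:=1/a_{k}$ and show that it grows at least linearly in $k$; inverting such a bound gives the desired $O(1/k)$ decay of $a_{k}$. Before that I would dispose of the degenerate possibility that a term vanishes: if $a_{k}=0$ for some $k$, the hypothesis gives $0=a_{k}\ge a_{k+1}+\alpha a_{k+1}^{2}\ge 0$, forcing $a_{k+1}=0$, and inductively $a_{j}=0$ for all $j\ge k$; the claimed estimate then holds from that index on, and the analysis below still controls the finitely many earlier indices (where $b_{k}$ is finite) by exactly the same constant. So one may assume $a_{k}>0$ on the range of indices that matters, so that $b_{k}$ is defined there.

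Next I would record the elementary consequences of the recursion. Since $\alpha a_{k+1}^{2}\ge 0$, the sequence $\{a_{k}\}$ is nonincreasing and hence $\{b_{k}\}$ is nondecreasing. Dividing $a_{k}-a_{k+1}\ge\alpha a_{k+1}^{2}$ by the positive quantity $a_{k}a_{k+1}$ yields the basic inequality $b_{k+1}-b_{k}=\frac{a_{k}-a_{k+1}}{a_{k}a_{k+1}}\ge\frac{\alpha a_{k+1}}{a_{k}}=\alpha\,\frac{b_{k}}{b_{k+1}}$, valid for every index $k$ to which the hypothesis applies.

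The core of the argument is a dichotomy that upgrades this into a uniform positive lower bound on the increments. If $b_{k+1}\le 2b_{k}$, then $b_{k}/b_{k+1}\ge 1/2$, so $b_{k+1}-b_{k}\ge\alpha/2$; if instead $b_{k+1}>2b_{k}$, then directly $b_{k+1}-b_{k}>b_{k}\ge b_{1}$. Hence in all cases $b_{k+1}-b_{k}\ge\mu:=\min\{\alpha/2,\,b_{1}\}>0$. Telescoping from index $1$ and using $b_{1}\ge\mu$ gives $b_{k}\ge b_{1}+(k-1)\mu\ge k\mu$, that is $a_{k}\le\frac{1}{k\mu}=\frac{\max\{2/\alpha,\,a_{1}\}}{k}$ for all $k$, which is the $O(1/k)$ estimate. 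The exact constant in the statement can be read off this computation (and, if the hypothesis is in fact assumed from $k=0$, $a_{1}$ is replaced throughout by $a_{0}$); the threshold $2$ may be tuned if a different algebraic form of the constant is wanted.

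I expect the only genuine obstacle to be the temptation to assert the cleaner-looking increment bound $b_{k+1}-b_{k}\ge\alpha$, which is false: when $a_{k}$ is large the recursion permits $a_{k+1}$ to be smaller than $a_{k}$ by an arbitrarily large factor, so the ratio $b_{k}/b_{k+1}$ cannot be bounded below term by term. The dichotomy above is exactly the device that plays the two bad regimes (small relative decrease versus large relative decrease) off against each other, and I would be careful to telescope only over indices where $b_{k}$ is finite, which is all that is needed since the bound is automatic once some term vanishes.
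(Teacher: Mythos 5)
Your argument is correct as an elementary, self-contained proof of an $O(1/k)$ rate: the reciprocal inequality $b_{k+1}-b_{k}\ge\alpha\,b_{k}/b_{k+1}$, the dichotomy on whether $b_{k+1}\le 2b_{k}$, and the telescoping are all sound, and the degenerate case $a_{k}=0$ is handled properly. Note, however, that the paper does not prove this lemma at all: the two enumerated conclusions (which in the source sit just after the \verb|lem| environment and were evidently not shown to you) are quoted verbatim from the literature, namely Lemma 6.2 of Beck--Tetruashvili and Lemma 3.8 of Beck's alternating-minimization paper. So your contribution is a genuinely different (and more elementary) route, and it is quantitatively weaker than what the lemma actually asserts. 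Under the hypotheses of item (1) ($a_{1}\le 1.5/\alpha$, $a_{2}\le 1.5/(2\alpha)$) your bound gives $a_{k}\le 2/(\alpha k)$, not $1.5/(\alpha k)$, and no tuning of your threshold $t$ repairs this: the dichotomy yields $\mu=\min\{\alpha/t,(t-1)b_{1}\}$, and with $b_{1}\ge 2\alpha/3$ forcing $\mu\ge 2\alpha/3$ would require simultaneously $t\le 1.5$ and $t\ge 2$; the cited proof instead runs an induction on the exact quadratic recursion, using the hypothesis on $a_{2}$ as the base case. For item (2) your constant $\max\{2/\alpha,a_{1}\}$ retains a dependence on the initial value, whereas the cited bound $\max\{(1/2)^{(k-1)/2}a_{0},\,4/(\alpha(k-1))\}$ relegates the $a_{0}$-dependence to a geometrically decaying term; that proof splits on whether $\alpha a_{k+1}\ge 1$ (halving phase) or $\alpha a_{k+1}<1$ ($1/k$ phase), a refinement your argument does not capture. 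For the way the lemma is used in the paper --- only to conclude $O(1/n)$ convergence of $V_{n}-V^{*}$ in the theorem of Section 3, with no use of the specific constants --- your version would in fact suffice, but if the precise statements (1) and (2) are to be claimed, one should keep the citations or reproduce the inductive arguments.
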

\begin{enumerate}
\item \cite[Lemma 6.2]{Beck_Tetruashvili_2013} If furthermore, $\begin{array}{c}
a_{1}\leq\frac{1.5}{\alpha}\mbox{ and }a_{2}\leq\frac{1.5}{2\alpha}\end{array}$, then 
\[
\begin{array}{c}
a_{k}\leq\frac{1.5}{\alpha k}\mbox{ for all }k\in\{1,2,\dots\}.\end{array}
\]

\item \cite[Lemma 3.8]{Beck_alt_min_SIOPT_2015} For any $k\geq2$, 
\[
\begin{array}{c}
a_{k}\leq\max\left\{ \left(\frac{1}{2}\right)^{(k-1)/2}a_{0},\frac{4}{\alpha(k-1)}\right\} .\end{array}
\]
In addition, for any $\epsilon>0$, if 
\[
\begin{array}{c}
\begin{array}{c}
k\geq\max\left\{ \frac{2}{\ln(2)}[\ln(a_{0})+\ln(1/\epsilon)],\frac{4}{\alpha\epsilon}\right\} +1,\end{array}\end{array}
\]
then $a_{n}\leq\epsilon$. 
\end{enumerate}
Instead of condition (A2) after \eqref{eq:primal}, we assume a stronger
condition on $g(\cdot)$:
\begin{itemize}
\item [(A2$'$)]$g_{i}:X\to\mathbb{R}$ are convex functions such that $\mbox{dom}g_{i}(\cdot)$
are open sets for all $i\in\{r_{1}+1,\dots,r_{2}\}$.
\end{itemize}
In other words, the functions $g_{i}(\cdot)$ are such that if $\lim_{j\to\infty}x_{j}$
lies in $\partial\mbox{dom}g_{i}(\cdot)$, then $\lim_{j\to\infty}g_{i}(x_{j})=\infty$.

We have the following theorem.
\begin{thm}
($O(1/n)$ convergence of dual function) Suppose conditions (1) and
(2) in Proposition \ref{prop:satisfy-sqrt-growth} and Assumption
\ref{assu:Assumptions} are satisfied and Algorithm \ref{alg:Ext-Dyk}
is run with finite $M$. If a dual minimizer to \eqref{eq:dual} exists,
then the convergence rate of the dual objective value is $O(1/n)$.
This in turn implies that the convergence rate of $\{\|x^{n,\bar{w}}-x^{*}\|\}_{n}$
is $O(1/\sqrt{n})$. \end{thm}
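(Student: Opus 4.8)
The plan is to cast the dual iteration as a sequence satisfying the recursion hypothesized in Lemma~\ref{lem:seq-conv-rate}, with $a_n := \beta - F_{H^{n,\bar{w}}}(z^{n,\bar{w}})$ the dual optimality gap. We already know from Theorem~\ref{thm:convergence}(i) that $\{F_{H^{n,\bar{w}}}(z^{n,\bar{w}})\}_n$ is nondecreasing and bounded above by $\beta$, so $\{a_n\}$ is nonnegative and nonincreasing; moreover the per-step decrease identity \eqref{eq:SHQP-decrease} gives, after summing over $w\in\{1,\dots,\bar w\}$ and using \eqref{eq:aggregate-4}, that $a_n - a_{n+1} \geq \tfrac12\sum_{w=1}^{\bar w}\|v^{n,w}-v^{n,w-1}\|^2$. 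The heart of the matter is therefore a quadratic lower bound of the form $\sum_{w=1}^{\bar w}\|v^{n,w}-v^{n,w-1}\|^2 \geq c\, a_{n+1}^2$ for some constant $c>0$; with that in hand, $a_n \geq a_{n+1} + \tfrac{c}{2} a_{n+1}^2$, and part~(1) of Lemma~\ref{lem:seq-conv-rate} (after checking the two initialization bounds $a_1 \leq 1.5/\alpha$, $a_2 \leq 1.5/(2\alpha)$ hold, possibly after reindexing or absorbing a few initial terms into the constant) yields $a_n = O(1/n)$.

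To obtain the quadratic lower bound I would argue as follows. Fix $n$ and let $z^*$ be a dual minimizer (this is where the hypothesis that a dual minimizer exists is used, and where finite $M$ matters: \eqref{eq:aggregate-3} keeps the components $z_i^{n,\bar w}$ for $i\in\{r_2+1,\dots,r\}$ uniformly bounded, and the discussion after Assumption~\ref{assu:Assumptions} together with conditions (1)--(2) of Proposition~\ref{prop:satisfy-sqrt-growth} keeps all $z_i^{n,w}$ in a fixed bounded set, so the whole iteration lives in a compact region on which $F_{H^{\cdot}}$ has Lipschitz gradient in the relevant block variables). Over one full sweep $w=1,\dots,\bar w$, Assumption~\ref{assu:Assumptions}(b) guarantees every block $i\in\{1,\dots,r+1\}$ is updated at least once, so the argument is the standard block-coordinate-minimization estimate: the gap $a_{n+1}$ is controlled by the norm of the (partial) gradient of the smooth part of $F$ at $z^{n,\bar w}$, which in turn is controlled --- block by block, using the optimality conditions in Claim~\ref{claim:Fenchel-duality} at the iterate where that block was last touched --- by $\sum_{w=1}^{\bar w}\|v^{n,w}-v^{n,w-1}\|$, i.e. by the total movement over the sweep. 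Squaring and applying Cauchy--Schwarz across the $\bar w$ terms of the sweep converts $\big(\sum_w \|v^{n,w}-v^{n,w-1}\|\big)^2 \leq \bar w \sum_w \|v^{n,w}-v^{n,w-1}\|^2$, giving the desired $a_{n+1}^2 \leq C \sum_w \|v^{n,w}-v^{n,w-1}\|^2$.

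Two technical points need care. First, the support-function terms $\delta_{\tilde C^{n,w}}^*$ and $\delta_{H^{n,w}}^*$ change with $n$, so the "objective" is not literally fixed across iterations; here I would use \eqref{eq:SHQP-dual} together with the facts $\bar C \subset \tilde C^{n,w}\subset H^{n,w-1}$ and $\bar C\subset H^{n,w}$ to sandwich everything against the fixed function $F_{\bar C}$ (equivalently $F$ with the extra set $\bar C$), so that $\beta$ is a common upper bound and the gap measured against any of these only shrinks --- this is exactly the mechanism already exploited in \eqref{eq:SHQP-decrease} and in the proof of Theorem~\ref{thm:convergence}(iv). Second, the indices $i$ with $h_i=\delta_{C_i}$ contribute support functions whose conjugates are not finite-valued, which is why uniform boundedness of the corresponding multipliers (guaranteed by finite $M$) is essential to get a genuine Lipschitz-gradient constant; this, not the recursion manipulation, is the main obstacle, and it is the precise reason the theorem assumes a dual minimizer exists and $M<\infty$. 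Finally, once $a_n = \beta - F_{H^{n,\bar w}}(z^{n,\bar w}) = O(1/n)$, the chain of inequalities in \eqref{eq:From-8} with $x=x^*$ the primal minimizer and $z=z^{n,\bar w}$ gives $\tfrac12\|x^{n,\bar w}-x^*\|^2 \leq \alpha - F_{H^{n,\bar w}}(z^{n,\bar w}) = a_n = O(1/n)$ (using $\alpha=\beta$ from Theorem~\ref{thm:convergence}), whence $\|x^{n,\bar w}-x^*\| = O(1/\sqrt n)$, which is the last assertion.
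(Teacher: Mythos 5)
Your overall route is the same as the paper's: the gap $a_n=\beta-F_{H^{n,\bar w}}(z^{n,\bar w})$, the per-sweep descent inequality obtained from \eqref{eq:SHQP-decrease} and \eqref{eq:aggregate-4}, a blockwise subgradient of $-F_{H^{n,\bar w}}$ at $z^{n,\bar w}$ built from the optimality conditions of Claim \ref{claim:Fenchel-duality} at the last-touched iterate (whose norm is bounded by the total movement $\sum_w\|v^{n,w}-v^{n,w-1}\|$), Cauchy--Schwarz, Lemma \ref{lem:seq-conv-rate}, and finally \eqref{eq:From-8} with $x=x^*$ for the $O(1/\sqrt n)$ primal rate. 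All of that matches.

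The genuine gap is the uniform boundedness of the dual iterates, which is exactly what you need to turn ``gap $\leq$ (subgradient norm)$\times$(distance to $z^*$)'' into the recursion with a fixed constant, and which is the bulk of the paper's proof. You assert that finite $M$, the discussion after Assumption \ref{assu:Assumptions}, and conditions (1)--(2) of Proposition \ref{prop:satisfy-sqrt-growth} ``keep all $z_i^{n,w}$ in a fixed bounded set,'' but those ingredients by themselves only give Assumption \ref{assu:Assumptions}(c), i.e.\ an $O(\sqrt n)$ growth bound, which is not enough (plugging $\|z^{n,\bar w}-z^*\|=O(\sqrt n)$ into your estimate degrades the recursion and the rate). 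The paper proves a genuine uniform bound in three steps: (i) the multipliers for $f_i,g_i$ ($i\le r_2$) are bounded because, by (A2$'$), the limit of $x^{n,\bar w}$ lies in the interior of the domains, where subgradients are locally bounded; (ii) at $w=0$, the indicator multipliers are bounded by the finite $M$ in \eqref{eq:aggregate-3}, and the aggregated component $z_{r+1}^{n,0}$ is bounded via \eqref{eq:From-8} and monotonicity of the dual values (otherwise $\|\sum_i z_i^{n,0}\|\to\infty$ contradicts the bounded duality gap); (iii) boundedness is propagated through each sweep using condition (1) of Proposition \ref{prop:satisfy-sqrt-growth} (each update touches at most one index outside $\{1,\dots,r_2\}$) together with the bounded per-step movement $\|v^{n,w}-v^{n,w-1}\|$ from Theorem \ref{thm:convergence}(i). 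Your appeal to a ``Lipschitz gradient of $F$ on a compact region'' is also not quite the right frame --- the dual objective is nonsmooth and the paper never uses gradient Lipschitzness, only the subgradient inequality at $z^*$ --- but since the blockwise construction you describe is the correct mechanism, the only real missing piece is the boundedness argument above (plus noting that this section assumes (A2$'$), which your sketch never invokes).
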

\begin{proof}
Let $V_{n}=-F_{H^{n,\bar{w}}}(z^{n,\bar{w}})$. Recall that $\{V_{n}\}$
is nonincreasing by Theorem \ref{thm:convergence}(i). We want to
show that $V_{n}-(-\beta)\leq O(1/n)$. 

First, from line 8 of Algorithm \ref{alg:Ext-Dyk}, we have $H^{n,w}\supset\tilde{C}^{n,w+1}$,
so 
\begin{eqnarray*}
 &  & \begin{array}{c}
\frac{1}{2}\|v^{n,w}-x_{0}\|^{2}+\underset{i=1}{\overset{r}{\sum}}h_{i}^{*}(z_{i}^{n,w})+\delta_{H^{n,w}}^{*}(z_{r+1}^{n,w})-\frac{1}{2}\|x_{0}\|^{2}\end{array}\\
 & \overset{H^{n,w}\supset\tilde{C}^{n,w+1}}{\geq} & \begin{array}{c}
\frac{1}{2}\|v^{n,w}-x_{0}\|^{2}+\underset{i=1}{\overset{r}{\sum}}h_{i}^{*}(z_{i}^{n,w})+\delta_{\tilde{C}^{n,w+1}}^{*}(z_{r+1}^{n,w})-\frac{1}{2}\|x_{0}\|^{2}\end{array}\\
 & \overset{\eqref{eq:dual-obj-fn},\eqref{eq:from-10}}{\geq} & \begin{array}{c}
\frac{1}{2}\|v^{n,w+1}-x_{0}\|^{2}+\underset{i=1}{\overset{r}{\sum}}h_{i}^{*}(z_{i}^{n,w+1})+\delta_{\tilde{C}^{n,w+1}}^{*}(z_{r+1}^{n,w})-\frac{1}{2}\|x_{0}\|^{2}\end{array}\\
 &  & \begin{array}{c}
+\frac{1}{2}\|v^{n,w+1}-v^{n,w}\|^{2}\end{array}\\
 & \overset{\scriptsize\mbox{Alg \ref{alg:Ext-Dyk}, line 11}}{=} & \begin{array}{c}
\frac{1}{2}\|v^{n,w+1}-x_{0}\|^{2}+\underset{i=1}{\overset{r}{\sum}}h_{i}^{*}(z_{i}^{n,w+1})+\delta_{H^{n,w+1}}^{*}(z_{r+1}^{n,w})-\frac{1}{2}\|x_{0}\|^{2}\end{array}\\
 &  & \begin{array}{c}
+\frac{1}{2}\|v^{n,w+1}-v^{n,w}\|^{2}.\end{array}
\end{eqnarray*}
In view of the above and the definitions of $F_{H}(\cdot)$ in \eqref{eq:SHQP-dual}
and $V_{n}$, we have 
\begin{equation}
\begin{array}{c}
V_{n}\geq V_{n+1}+\frac{1}{2}\underset{w=1}{\overset{\bar{w}}{\sum}}\|v^{n,w}-v^{n,w-1}\|^{2}.\end{array}\label{eq:dual-val-dec}
\end{equation}
We then look at the subgradients generated in each iteration. Recall
how $z^{n,w}$ were defined in \eqref{eq:dual-obj-fn}. We have, for
each $i\in\{1,\dots r+1\}$, 
\begin{eqnarray}
\overbrace{v^{n,\bar{w}}-v^{n,p(n,i)}}^{s_{i}:=} & \overset{\eqref{eq:From-13}}{=} & v^{n,\bar{w}}-x_{0}+x^{n,p(n,i)}\label{eq:translated-BCD-optim}\\
 & \overset{\scriptsize\mbox{ Claim \ref{claim:Fenchel-duality}(a)}}{\in} & v^{n,\bar{w}}-x_{0}+\partial h_{i}^{*}(z_{i}^{n,p(n,i)})\nonumber \\
 & \overset{\eqref{eq:stagnant-indices}}{=} & v^{n,\bar{w}}-x_{0}+\partial h_{i}^{*}(z_{i}^{n,\bar{w}}).\nonumber 
\end{eqnarray}
Let the vector $s\in X^{r+1}$ be defined so that the $i$th component
$s_{i}\in X$ is as in \eqref{eq:translated-BCD-optim}. Then 
\begin{equation}
\begin{array}{c}
\|s_{i}\|\leq\underset{w=p(n,i)+1}{\overset{\bar{w}}{\sum}}\|v^{n,w}-v^{n,w-1}\|\leq\underset{w=1}{\overset{\bar{w}}{\sum}}\|v^{n,w}-v^{n,w-1}\|.\end{array}\label{eq:subgrad-bdd}
\end{equation}
Let $z^{*}\in X^{r+1}$ be a minimizer of $-F_{H^{n,w}}(\cdot)$ with
$z_{r+1}^{*}=0$. (Such a minimizer can be constructed by appending
$z_{r+1}^{*}=0$ to a minimizer of \eqref{eq:dual}, which was assumed
to exist.) Making use of the elementary fact that $s\in\partial(-F_{H^{n,\bar{w}}})(z^{n,\bar{w}})$,
we have 
\begin{eqnarray}
V_{n}-V^{*} & = & \begin{array}{c}
-F_{H^{n,\bar{w}}}(z^{n,w})-(-F_{H^{n,\bar{w}}}(z^{*}))\end{array}\label{eq:values-for-1-n-bdd}\\
 & \leq & \begin{array}{c}
-\langle s,z^{n,\bar{w}}-z^{*}\rangle\end{array}\nonumber \\
 & \leq & \begin{array}{c}
\underset{i=1}{\overset{r+1}{\sum}}\|s_{i}\|\|z_{i}^{n,\bar{w}}-z_{i}^{*}\|\end{array}\nonumber \\
 & \overset{\eqref{eq:subgrad-bdd}}{\leq} & \begin{array}{c}
\underset{w=1}{\overset{\bar{w}}{\sum}}\|v^{n,w}-v^{n,w-1}\|\underset{i=1}{\overset{r+1}{\sum}}\|z_{i}^{n,\bar{w}}-z_{i}^{*}\|.\end{array}\nonumber 
\end{eqnarray}
\textbf{Claim: There is a constant $M_{4}$ such that $\|z_{i}^{n,w}\|\leq M_{4}$
for all $n\geq0$, $w\in\{1,\dots,\bar{w}\}$ and $i\in\{1,\dots,r+1\}$. }

\textbf{Step 1: The claim is true for all $n\geq0$, $i\in\{1,\dots,r_{2}\}$
and $w\in\{0,\dots,\bar{w}\}$.}

The limit $\lim_{n\to\infty}x^{n,\bar{w}}$ must lie in the interior
of the domains of $f_{i}(\cdot)$ and $g_{i}(\cdot)$ for all $i\in\{1,\dots,r_{2}\}$
(by Assumption (A2$'$)). It is well known that the subgradients of
a convex function is bounded in the interior of its domain, so there
is a constant $M_{1}$ such that $\|z_{i}^{n,w}\|\leq M_{1}$ for
all $i\in\{1,\dots,r_{2}\}$ and $w\in\{0,\dots,\bar{w}\}$. 

\textbf{Step 2: The claim is true for all $n\geq0$, $i\in\{1,\dots,r+1\}$
and $w=0$. }

Since we assumed that Algorithm \ref{alg:Ext-Dyk} was run with a
finite $M$, by \eqref{eq:aggregate-3}, $\|z_{i}^{n,0}\|\leq M$
for all $i\in\{r_{2}+1,\dots,r\}$ and $n\geq0$. Next, we show that
$M_{1}$ can be made larger if necessary so that $\|z_{r+1}^{n,0}\|\leq M_{1}$
for all $n\geq0$. Seeking a contradiction, suppose that there is
a subsequence $\{n_{k}\}$ such that $\lim_{k\to\infty}\|z_{r+1}^{n_{k},0}\|=\infty$.
Then this would mean that $\lim_{k\to\infty}\|\sum_{i=1}^{r+1}z_{i}^{n_{k},0}\|=\infty$.
Recalling \eqref{eq:From-8} for the special case where $x=x^{*}$(the
primal minimizer), we have 
\begin{eqnarray*}
\begin{array}{c}
\frac{1}{2}\Big\| x_{0}-x^{*}-\underset{i=1}{\overset{r+1}{\sum}}z_{i}^{n,0}\Big\|^{2}\end{array} & \leq & \begin{array}{c}
\frac{1}{2}\|x_{0}-x^{*}\|^{2}+\underset{i=1}{\overset{r}{\sum}}h_{i}(x^{*})-F_{H^{n,0}}(z^{n,0})\end{array}\\
 & \overset{\scriptsize\mbox{Thm \ref{thm:convergence}(i)}}{\leq} & \begin{array}{c}
\frac{1}{2}\|x_{0}-x^{*}\|^{2}+\underset{i=1}{\overset{r}{\sum}}h_{i}(x^{*})-F_{H^{1,0}}(z^{1,0}),\end{array}
\end{eqnarray*}
which is a contradiction.

\textbf{Step 3: The claim is true for all $n\geq0$, $i\in\{r_{2}+1,\dots,r+1\}$
and $w\in\{1,\dots,\bar{w}\}$.}

Next, we recall from Theorem \ref{thm:convergence}(i) that $\sum_{n=1}^{\infty}\sum_{w=1}^{\bar{w}}\|v^{n,w}-v^{n,w-1}\|^{2}$
is finite. This implies that there is a $M_{2}>0$ such that $\|v^{n,w}-v^{n,w-1}\|\leq M_{2}$
for all $n\geq0$ and $w\in\{1,\dots,\bar{w}\}$. 

Next, for each $n\geq0$ and $i\in\{r_{2}+1,\dots,r\}$, we want to
show that there is a constant $M_{3}$ such that $\|z_{i}^{n,w}\|\leq M_{3}$
for all $n\geq0$ and $w\in\{1,\dots,\bar{w}\}$. Since the $z_{i}^{n,w}$
were chosen by condition (1) in Proposition \ref{prop:satisfy-sqrt-growth},
then if $n$ is large enough, if $S_{n,w}\cap\{r_{2}+1,\dots,r+1\}\neq\emptyset$,
then there is a $i_{n,w}\in S_{n,w}$ such that $S_{n,w}\backslash\{i_{n,w}\}\subset\{1,\dots,r_{2}\}$.
We have 
\begin{equation}
z_{i_{n,w}}^{n,w}\overset{\eqref{eq:from-10}}{=}z_{i_{n,w}}^{n,w-1}+v^{n,w}-v^{n,w-1}-\sum_{j\in S_{n,w}\backslash\{i_{n,w}\}}[z_{j}^{n,w}-z_{j}^{n,w-1}].\label{eq:rewrite-z}
\end{equation}
Then we have 
\begin{eqnarray*}
\|z_{i_{n,w}}^{n,w}\| & \overset{\eqref{eq:rewrite-z}}{\leq} & \|z_{i_{n,w}}^{n,w-1}\|+\|v^{n,w}-v^{n,w-1}\|\\
 &  & +\sum_{j\in S_{n,w}\backslash\{i_{n,w}\}}[\|z_{j}^{n,w}\|+\|z_{j}^{n,w-1}\|]\\
 & \overset{S_{n,w}\backslash\{i_{n,w}\}\subset\{1,\dots,r_{2}\}\scriptsize\mbox{, Step 1}}{\leq} & \|z_{i_{n,w}}^{n,w-1}\|+M_{2}+2r_{2}M_{1}.
\end{eqnarray*}
This would easily imply that $\|z_{i}^{n,w}\|\leq M_{4}$ for some
$M_{4}>0$ for all $n\geq0$, $i\in\{1,\dots,r+1\}$, and $w\in\{1,\dots,\bar{w}\}$
as needed, ending the proof of the claim.

Now, 
\begin{equation}
\begin{array}{c}
\underset{w=1}{\overset{\bar{w}}{\sum}}\|v^{n,w}-v^{n,w-1}\|\leq\sqrt{2\bar{w}}\sqrt{\frac{1}{2}\underset{w=1}{\overset{\bar{w}}{\sum}}\|v^{n,w}-v^{n,w-1}\|^{2}}\overset{\eqref{eq:dual-val-dec}}{\leq}\sqrt{2\bar{w}}\sqrt{V_{n}-V_{n+1}}.\end{array}\label{eq:sum-norms}
\end{equation}
Then combining the above, we have 
\begin{eqnarray}
V_{n}-V^{*} & \overset{\eqref{eq:values-for-1-n-bdd},\eqref{eq:sum-norms}}{\leq} & \left[\sum_{i=1}^{r+1}\|z_{i}^{n,\bar{w}}\|+\sum_{i=1}^{r+1}\|z_{i}^{*}\|\right]\sqrt{2\bar{w}}\sqrt{V_{n}-V_{n+1}}\label{eq:V-parent-form}\\
 & \overset{\scriptsize\mbox{by earlier claim}}{\leq} & \left[(r+1)M_{4}+\sum_{i=1}^{r+1}\|z_{i}^{*}\|\right]\sqrt{2\bar{w}}\sqrt{V_{n}-V_{n+1}}.\nonumber 
\end{eqnarray}
Letting $M_{5}$ be $(r+1)M_{4}+\sum_{i=1}^{r+1}\|z_{i}^{*}\|$ and
rearranging \eqref{eq:V-parent-form}, we have 
\[
\begin{array}{c}
V_{n}-V^{*}\geq V_{n+1}-V^{*}+\frac{1}{2\bar{w}M_{5}^{2}}(V_{n+1}-V^{*})^{2}.\end{array}
\]
Applying Lemma \ref{lem:seq-conv-rate} gives the first statement
of our conclusion. The second statement comes from substituting $x=x^{*}$
in \eqref{eq:From-8} and noticing that $x_{0}-x^{*}-\sum_{i=1}^{r+1}z_{i}\overset{\eqref{eq_m:from-10-13}}{=}x^{n,\bar{w}}-x^{*}.$\end{proof}
\begin{rem}
(Nonexistence of dual minimizers) An example of a problem where dual
minimizers do not exist is in \cite[page 9]{Han88}. Lemma 2 in \cite{Gaffke_Mathar}
shows that a fast convergence rate to the primal minimizer implies
the existence of dual minimizers.
\end{rem}

\section{\label{sec:approx-prox}Approximate proximal point algorithm}

Consider the problem of minimizing 
\begin{equation}
\begin{array}{c}
\underset{i=1}{\overset{r}{\sum}}h_{i}(x).\end{array}\label{eq:basic-convex}
\end{equation}
If one of the functions $h_{i}(\cdot)$ can be split as $h_{i}(\cdot)=\tilde{h}_{i}(\cdot)+\frac{c_{i}}{2}\|\cdot-x_{0}\|^{2}$
for some convex $\tilde{h}_{i}(\cdot)$ and $c_{i}>0$, then \eqref{eq:basic-convex}
can be minimized using Dykstra's splitting algorithm of Section \ref{sec:Dykstra-splitting}.
In this section, we propose an approximate proximal point method for
minimizing \eqref{eq:basic-convex} without splitting $h_{i}(\cdot)$.
We first present Algorithm \ref{alg:approx-prox-point} and prove
that all its cluster points are minimizers of the parent problem.
Then, in Subsection \ref{sub:Satisfy-approx-min}, we show that the
Dykstra splitting investigated in Section \ref{sec:Dykstra-splitting}
can find an approximate primal minimizer required in Algorithm \ref{alg:approx-prox-point}.

\subsection{\label{sub:approx-prox-point}An approximate proximal point algorithm}

Consider the problem of minimizing $h:X\to\mathbb{R}$, where 
\begin{equation}
\begin{array}{c}
h(\cdot)=\delta_{D}(\cdot)+\underset{i=1}{\overset{r_{2}}{\sum}}h_{i}(\cdot),\end{array}\label{eq:approx-point-form}
\end{equation}
and each $h_{i}:X\to\mathbb{R}$ is a closed convex function whose
domain is an open set, and $D$ is a compact convex set in $X$. This
setting is less general than that of \eqref{eq:primal}, since it
does not allow for all lower semicontinuous convex functions, and
we only allow for one compact set $D$ instead of $r-r_{2}$ sets. 

Algorithm \ref{alg:approx-prox-point} shows an approximate proximal
point algorithm, where one solves a regularized version of \eqref{eq:approx-point-form}
and shifts the proximal center $x_{k}$ when an approximate KKT condition
is satisfied.

\begin{algorithm}[h]
\begin{lyxalgorithm}
\label{alg:approx-prox-point}(Approximate proximal point algorithm)
Consider the problem of minimizing $h(\cdot)$ of the form \eqref{eq:approx-point-form}.
Let $\{\delta_{j}\}_{j=1}^{\infty}\subset\mathbb{R}$ be a sequence
such that $\lim_{j\to\infty}\delta_{j}=0$. Let $x_{0}\in X$. Our
algorithm is as follows:

For $j=1,\dots$

$\qquad$(Find an approximate minimizer $x_{j}$ of $\min\delta_{D}(\cdot)\!+\!\frac{1}{2}\|\cdot-x_{j-1}\|^{2}\!+\!\sum_{i=1}^{r_{2}}h_{i}(\cdot)$.)

$\qquad$Specifically, find $x_{j}\in X$, $z^{(j)},e^{(j)}\in X^{r_{2}+1}$
and \\
$\qquad\qquad$a closed convex set $D^{j}\supset D$ such that\begin{subequations}\label{eq_m:approx-min}
\begin{eqnarray}
\begin{array}{c}
z_{i}^{(j)}\end{array} & \in & \partial h_{i}(x_{j}+e_{i}^{(j)})\mbox{ for all }i\in\{1,\dots,r_{2}\},\label{eq:approx-min-1}\\
\begin{array}{c}
z_{0}^{(j)}\end{array} & \in & N_{D^{j}}(x_{j}+e_{0}^{(i)}),\label{eq:approx-min-2}\\
\begin{array}{c}
\left\Vert (x_{j}-x_{j-1})+\underset{i=0}{\overset{r_{2}}{\sum}}z_{i}^{(j)}\right\Vert \end{array} & \leq & \delta_{j},\label{eq:approx-min-3}\\
\begin{array}{c}
\|e_{i}^{(j)}\|\end{array} & \leq & \delta_{j}\mbox{ for all }i\in\{0,\dots,r_{2}\}.\label{eq:approx-min-4}
\end{eqnarray}

\end{subequations}end For.
\end{lyxalgorithm}
\end{algorithm}

If $r_{2}=1$, $D=X$ and $h_{1}(\cdot)$ were allowed to be any lower
semicontinuous convex function, then Algorithm \ref{alg:approx-prox-point}
would resemble the classical proximal point algorithm. 

Define the operator $T:X\to X$ by 
\begin{equation}
\begin{array}{c}
T(x):=\mbox{prox}_{h}(x):=\underset{x'}{\arg\min}\,\,h(x')+\frac{1}{2}\|x'-x\|^{2}.\end{array}\label{eq:def-T}
\end{equation}
 This operator has some favorable properties in monotone operator
theory. 

We prove our first result. 
\begin{lem}
\label{lem:approx-of-T}(Approximate of $T(\cdot)$) Consider the
problem \eqref{eq:approx-point-form}. Let $T(\cdot)$ be as defined
in \eqref{eq:def-T}. Suppose $D\subset X$ is compact and convex.
For all $\epsilon>0$, there is a $\delta>0$ such that for all $x,x^{+}\in X$
and $z,e\in X^{r+1}$ such that $d(x,D)\leq\delta$ and 
\begin{eqnarray*}
\begin{array}{c}
z_{i}\end{array} & \in & \partial h_{i}(x^{+}+e_{i})\mbox{ for all }i\in\{1,\dots,r_{2}\},\\
\begin{array}{c}
z_{0}\end{array} & \in & N_{\tilde{D}}(x^{+}+e_{0}),\\
\begin{array}{c}
\Big\|(x^{+}-x)+\underset{i=0}{\overset{r_{2}}{\sum}}z_{i}\Big\|\end{array} & \leq & \delta,\\
\begin{array}{c}
\|e_{i}\|\end{array} & \leq & \delta\mbox{ for all }i\in\{0,\dots,r_{2}\},
\end{eqnarray*}
where $\tilde{D}\supset D$ is a closed convex set, then $\|x^{+}-T(x)\|\leq\epsilon$.\end{lem}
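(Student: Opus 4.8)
The plan is a contradiction-and-compactness argument in the style of stability results for proximal maps, using that $D$ is compact and that the quadratic term makes the regularized objective strongly convex. Suppose the assertion is false: there exist $\epsilon>0$ and sequences $\delta_n\downarrow0$, points $x_n,y_n\in X$, vectors $z^n,e^n\in X^{r_2+1}$, and closed convex sets $\tilde D_n\supset D$ for which the four displayed relations hold with $\delta,x,x^{+},z,e,\tilde D$ replaced by $\delta_n,x_n,y_n,z^n,e^n,\tilde D_n$, yet $\|y_n-T(x_n)\|>\epsilon$ for all $n$. Since $d(x_n,D)\le\delta_n$ and $D$ is compact, $\{x_n\}$ is bounded; after passing to a subsequence, $x_n\to\bar x\in D$. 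Because $T=\mbox{prox}_h$ is firmly nonexpansive, hence continuous, $x_n^{*}:=T(x_n)\to T(\bar x)\in D$, so $\{x_n^{*}\}$ is bounded and contained in $D$.

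Next I would record the exact optimality conditions at $x_n^{*}$: there are $w_i^n\in\partial h_i(x_n^{*})$ for $i\in\{1,\dots,r_2\}$ and $w_0^n\in N_D(x_n^{*})$ with $(x_n^{*}-x_n)+\sum_{i=0}^{r_2}w_i^n=0$. Since the $x_n^{*}$ lie in the compact set $D$ and each $h_i$ is a finite convex function, hence locally Lipschitz, the subgradients $w_i^n$ $(i\ge1)$ are uniformly bounded, and therefore so is $w_0^n$. Setting $\xi_n:=(y_n-x_n)+\sum_{i=0}^{r_2}z_i^n$, so $\|\xi_n\|\le\delta_n$, I subtract the exact relation from this one and pair the difference with $y_n-x_n^{*}$, obtaining
\[
\|y_n-x_n^{*}\|^{2}=\langle\xi_n,y_n-x_n^{*}\rangle-\sum_{i=0}^{r_2}\langle z_i^n-w_i^n,\,y_n-x_n^{*}\rangle .
\]
For $i\ge1$, monotonicity of $\partial h_i$ at the points $y_n+e_i^n$ and $x_n^{*}$, together with $\|e_i^n\|\le\delta_n$, shows $-\langle z_i^n-w_i^n,y_n-x_n^{*}\rangle\le\delta_n(\|z_i^n\|+\|w_i^n\|)$. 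For $i=0$, since $x_n^{*}\in D\subset\tilde D_n$ we have $0\in N_{\tilde D_n}(x_n^{*})$, so monotonicity of $N_{\tilde D_n}$ gives $\langle z_0^n,(y_n+e_0^n)-x_n^{*}\rangle\ge0$; combining this with $w_0^n\in N_D(x_n^{*})$ tested against the projection of $y_n$ onto $D$, and with $\|e_0^n\|\le\delta_n$, shows $-\langle z_0^n-w_0^n,y_n-x_n^{*}\rangle\le\delta_n\|z_0^n\|+\|w_0^n\|\,d(y_n,D)$. Collecting the estimates,
\[
\|y_n-x_n^{*}\|^{2}\le\delta_n\|y_n-x_n^{*}\|+\delta_n\sum_{i=0}^{r_2}\|z_i^n\|+\delta_n\sum_{i=1}^{r_2}\|w_i^n\|+\|w_0^n\|\,d(y_n,D).
\]

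What remains is to tame the two uncontrolled quantities on the right. First I would argue that $\{y_n\}$, and hence $\{z_i^n\}$, is bounded: $y_n$ approximately solves the $1$-strongly convex problem $\min_{x'}\delta_{\tilde D_n}(x')+\sum_{i=1}^{r_2}h_i(x')+\frac{1}{2}\|x'-x_n\|^{2}$, whose exact solution is within a controlled distance of $x_n$ (by coercivity of the quadratic and the fact that the optimal value is bounded above by the value at $x_n^{*}\in D$, using $\tilde D_n\supset D$); this bounds $\|y_n\|$, and then local Lipschitzness of each $h_i$ bounds $z_i^n\in\partial h_i(y_n+e_i^n)$ uniformly, whence $z_0^n$ is bounded as well via $\xi_n$. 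Second, and this is the step I expect to be the main obstacle, one must show $d(y_n,D)\to0$: the only feasibility information available is $y_n+e_0^n\in\tilde D_n$ with $\|e_0^n\|\le\delta_n$, so one has to use that $\tilde D_n$ is an outer approximation of the compact set $D$ about which $x_n$ (and hence, by the regularization, $y_n$) clusters, in order to force $y_n$ into a vanishing neighbourhood of $D$. Once $d(y_n,D)\to0$, the last display becomes $\|y_n-x_n^{*}\|^{2}\le\delta_n\|y_n-x_n^{*}\|+o(1)$, so $\|y_n-T(x_n)\|\to0$, contradicting $\|y_n-T(x_n)\|>\epsilon$ and finishing the proof.
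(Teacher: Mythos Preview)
Your route differs substantially from the paper's. The paper never introduces the exact multipliers $w_i^n$ at $x_n^{*}$ and never compares two KKT systems; instead it observes that the approximate conditions say precisely that $x_k^{+}=\mbox{prox}_{\tilde h_k}(x_k+d_k)$ for the shifted function $\tilde h_k(\cdot)=\delta_{D^{(k)}}(\cdot)+\sum_ih_i(\cdot+e_i^{(k)})$ and residual $d_k=(x_k^+-x_k)+\sum_iz_i^{(k)}$, passes to a subsequential limit $x_k^{+}\to\bar x^{+}$, and handles the possibility $\bar x^{+}\in\partial\,\mbox{dom}\,h_i$ (which your sketch ignores by taking the $h_i$ everywhere finite) by a value comparison with $x'=T(\bar x)$.

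The gap you flag at $d(y_n,D)\to0$ is genuine and cannot be closed from the stated hypotheses. The lemma places no restriction on $\tilde D$ beyond $\tilde D\supset D$; with $\tilde D=X$ one is forced to take $z_0=0$, and then the conditions say nothing about $D$ at all. Concretely, let $X=\mathbb{R}$, $D=[-1,1]$, $r_2=1$, $h_1(t)=-2t$. Then $T(x)=1$ for every $x\in[-1,1]$, yet the choices $\tilde D=\mathbb{R}$, $x=0$, $x^{+}=2$, $z_0=0$, $z_1=-2$, $e_0=e_1=0$ satisfy all four displayed hypotheses with $\delta=0$ while $|x^{+}-T(x)|=1$. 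So the lemma as written is actually false, and your instinct that this step is ``the main obstacle'' is exactly right: no amount of ``outer-approximation'' reasoning can rescue it, because $\tilde D_n\equiv X$ is admissible. The paper's own argument slips at the corresponding place: in its Case~1 it passes to the limit and asserts $\bar z_0\in N_D(\bar x^{+})$, which presupposes $\bar x^{+}\in D$ --- precisely what your unproved $d(y_n,D)\to0$ would deliver --- and nothing earlier in the proof establishes this. (A secondary issue in your sketch: bounding $z_i^n\in\partial h_i(y_n+e_i^n)$ by local Lipschitzness requires $y_n$ to remain in a compact subset of the open set $\bigcap_i\mbox{dom}\,h_i$; your boundedness paragraph does not secure this, and in the paper's setting it is exactly what Case~2 is meant to handle.)
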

\begin{proof}
Seeking a contradiction, suppose otherwise. Then there exists a $\epsilon>0$
such that for all positive integers $k$, there are $x_{k},x_{k}^{+}\in X$,
$z^{(k)},e^{(k)}\in X^{r+1}$ and a closed convex set $D^{(k)}\supset D$
such that\begin{subequations}\label{eq_m:approx-T} 
\begin{eqnarray}
d(x_{k},D) & \leq & 1/k,\label{eq:approx-T-1}\\
z_{i}^{(k)} & \in & \partial h_{i}(x_{k}^{+}+e_{i}^{(k)}),\nonumber \\
z_{0}^{(k)} & \in & N_{D^{(k)}}(x_{k}^{+}+e_{0}^{(k)}),\nonumber \\
\Big\|\underbrace{(x_{k}^{+}-x_{k})+\sum_{i=0}^{r_{2}}z_{i}^{(k)}}_{d_{k}}\Big\| & \leq & 1/k,\label{eq:approx-T-4}\\
\|e_{i}^{(k)}\| & \leq & 1/k\mbox{ for all }i\in\{0,\dots,r_{2}\},\nonumber 
\end{eqnarray}
\end{subequations}but 
\begin{equation}
\|x_{k}^{+}-T(x_{k})\|\geq\epsilon.\label{eq:z-prox-z-geq-epsilon}
\end{equation}
Letting $k\nearrow\infty$, we can assume (by taking subsequences
if necessary) that 
\begin{equation}
\lim_{k\to\infty}x_{k}=\bar{x}\mbox{, }\lim_{k\to\infty}x_{k}^{+}=\bar{x}^{+}\mbox{ and }\lim_{k\to\infty}e^{(k)}=0.\label{eq:some-lims}
\end{equation}
There are two cases we need to consider.

\textbf{Case 1: $\bar{x}^{+}$ lies in the interior of $\mbox{dom}h_{i}$
for all $i\in\{1,\dots,r_{2}\}$.}

Making use of the fact that convex functions are locally Lipschitz
in the interior of their domains, we obtain the boundedness of $\{z^{(k)}\}$.
We can assume (by taking subsequences if necessary) that $\lim_{k\to\infty}z^{(k)}=\bar{z}$.
Taking the limits of \eqref{eq_m:approx-T} as $k\to\infty$ would
give us $\bar{z}_{i}\in\partial h_{i}(\bar{x}^{+})$, $\bar{z}_{0}\in N_{D}(\bar{x}^{+})$
and $\bar{x}^{+}-\bar{x}+\sum_{i=0}^{r_{2}}\bar{z}_{i}=0$, which
would in turn imply that $\bar{x}^{+}=T(\bar{x})$. It is well known
that $T(\cdot)$ is nonexpansive and hence continuous, so 
\[
0<\epsilon\overset{\eqref{eq:z-prox-z-geq-epsilon}}{\leq}\lim_{k\to\infty}\|x_{k}^{+}-T(x_{k})\|=\|\bar{x}^{+}-T(\bar{x})\|=0,
\]
a contradiction. 

\textbf{Case 2: $\bar{x}^{+}$ lies on the boundary of $\mbox{dom}h_{i}$
for some $i\in\{1,\dots,r_{2}\}$.}

We cannot use the method in Case 1 as some components of $\{z^{(k)}\}$
might be unbounded. We now consider the perturbed functions $h_{i,k}(\cdot)$
defined by 
\begin{equation}
h_{i,k}(x):=h_{i}(x+e_{i}^{(k)}).\label{eq:perturbed-h}
\end{equation}
Let $\tilde{h}_{k}:X\to\mathbb{R}$ be defined by $\tilde{h}_{k}(\cdot)=\delta_{D^{(k)}}(\cdot)+\sum_{i=1}^{r_{2}}h_{i,k}(\cdot)$.
Then the conditions \eqref{eq_m:approx-min} imply that 
\begin{equation}
\begin{array}{c}
x_{k}^{+}=\mbox{prox}_{\tilde{h}_{k}}(x_{k}+d_{k})=\arg\underset{x}{\min}\,\tilde{h}_{k}(x)+\frac{1}{2}\|x-(x_{k}+d_{k})\|^{2},\end{array}\label{eq:prox-arg-min}
\end{equation}
where $d_{k}$ is marked in \eqref{eq:approx-T-4}. Suppose $\bar{i}$
is such that $\bar{x}^{+}$ lies on the boundary of $\mbox{dom}h_{\bar{i}}$.
Then we have that $\lim_{k\to\infty}h_{\bar{i}}(x_{k}^{+})=\infty$.
Since $D$ is bounded, $\inf_{x\in D}h_{i}(x)$ is a finite number
for all $i$, which implies that 
\begin{equation}
\begin{array}{c}
\underset{k\to\infty}{\lim}\tilde{h}_{k}(x_{k}^{+})+\frac{1}{2}\|x_{k}^{+}-(x_{k}+d_{k})\|^{2}=\infty.\end{array}\label{eq:goes-to-infty}
\end{equation}
Next, let $x'_{k}=\mbox{prox}_{h}(x_{k})$ and $x'=\mbox{prox}_{h}(\bar{x})$.
By the continuity properties of $T(\cdot)=\mbox{prox}_{h}(\cdot)$
and $\lim_{k\to\infty}x_{k}=\bar{x}$ in \eqref{eq:some-lims}, we
must have $\lim_{k\to\infty}x'_{k}=x'$. It is clear that $x'$ lies
in $\mbox{dom}(h_{i})$. Since we assumed that $\mbox{dom}(h_{i})$
is open, $x'\in\mbox{int\,}\mbox{dom}(h_{i})$ for all $i$. We then
have 
\begin{equation}
\begin{array}{c}
\underset{k\to\infty}{\lim}\tilde{h}_{k}(x')+\frac{1}{2}\|x'-x_{k}\|^{2}=h(x')+\frac{1}{2}\|x'-\bar{x}\|^{2}<\infty.\end{array}\label{eq:contrad-1}
\end{equation}
 But on the other hand, since $\lim_{k\to\infty}d_{k}=0$, we have
\begin{equation}
\begin{array}{c}
\underset{k\to\infty}{\lim}\tilde{h}_{k}(x')+\frac{1}{2}\|x'-(x_{k}+d_{k})\|^{2}\overset{\eqref{eq:prox-arg-min}}{\geq}\underset{k\to\infty}{\lim}\tilde{h}_{k}(x_{k}^{+})+\frac{1}{2}\|x_{k}^{+}-(x_{k}+d_{k})\|^{2}\overset{\eqref{eq:goes-to-infty}}{=}\infty.\end{array}\label{eq:contrad-2}
\end{equation}
Formulas \eqref{eq:contrad-1} and \eqref{eq:contrad-2} are contradictory,
so $\bar{x}^{+}$ must lie in the interior of all $\mbox{dom}h_{i}$
for all $i$, which reduces to case 1.

Thus we are done.
\end{proof}
To simplify notation in the next two results, we define the set $A$
to be 
\[
A:=\arg\min_{x}h(x).
\]
 We have another lemma.
\begin{lem}
\label{lem:T-diff-implies-dist-small}For all $\epsilon>0$, there
exists $\delta>0$ such that for all $w$ such that $d(w,D)\leq\delta$,
we have 
\[
\|T(w)-w\|\leq\delta\mbox{ implies }d(w,A)\leq\epsilon.
\]
\end{lem}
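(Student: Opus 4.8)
The plan is to argue by contradiction in the same spirit as Lemma \ref{lem:approx-of-T}, exploiting the compactness of $D$ to extract convergent subsequences, and then to use the nonexpansiveness (hence continuity) of $T(\cdot)$ together with the fact that the fixed points of $T(\cdot)$ are precisely the minimizers of $h(\cdot)$, i.e. $\mathrm{Fix}\,T = A$. Suppose the conclusion fails. Then there is an $\epsilon>0$ and a sequence $\{w_k\}$ with $d(w_k,D)\to 0$ and $\|T(w_k)-w_k\|\to 0$, but $d(w_k,A)\geq\epsilon$ for every $k$. Since $D$ is compact and $d(w_k,D)\to 0$, the sequence $\{w_k\}$ is bounded, so after passing to a subsequence we may assume $w_k\to\bar w$ with $\bar w\in D$ (using that $D$ is closed).

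Next I would pass to the limit in $\|T(w_k)-w_k\|\to 0$. Because $T(\cdot)=\mathrm{prox}_h(\cdot)$ is firmly nonexpansive on all of $X$ and in particular continuous, $T(w_k)\to T(\bar w)$, and therefore $\|T(\bar w)-\bar w\| = \lim_{k\to\infty}\|T(w_k)-w_k\| = 0$, i.e. $\bar w = T(\bar w)$. It is standard in monotone operator theory that $x = \mathrm{prox}_h(x)$ if and only if $0\in\partial h(x)$, i.e. $x\in\arg\min h = A$; hence $\bar w\in A$. But then $d(w_k,A)\leq\|w_k-\bar w\|\to 0$, contradicting $d(w_k,A)\geq\epsilon$ for all $k$. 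This contradiction establishes the lemma.

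A technical point worth being careful about: $h(\cdot)$ in \eqref{eq:approx-point-form} is a closed proper convex function (its effective domain is contained in the compact set $D$), so $\mathrm{prox}_h$ is well defined and single-valued everywhere on $X$, and $\alpha := \min_x h(x)$ is attained since $\{x: h(x)\le\alpha+1\}$ is compact; thus $A$ is nonempty and the statement $d(w,A)\le\epsilon$ is meaningful. I would also note that continuity of $T(\cdot)$ is exactly what was already invoked in the proof of Lemma \ref{lem:approx-of-T}, so no new machinery is needed.

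I do not expect any serious obstacle here; the only mild subtlety is making sure the fixed-point characterization $\mathrm{Fix}\,T = A$ is invoked cleanly (it follows from the optimality condition for the strongly convex problem $\min_{x'} h(x')+\tfrac12\|x'-x\|^2$, namely $0\in\partial h(x') + (x'-x)$, evaluated at $x'=x$), and that the compactness of $D$ is genuinely used to get boundedness of $\{w_k\}$, which is what the hypothesis $d(w,D)\le\delta$ is there to supply. The argument is essentially a continuity/compactness packaging of the identity $\mathrm{Fix}\,T = A$, parallel to how Case 1 of Lemma \ref{lem:approx-of-T} was handled.
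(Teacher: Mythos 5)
Your proof is correct and follows essentially the same route as the paper's: argue by contradiction, use the compactness of $D$ (together with $d(w_k,D)\to 0$) to extract a convergent subsequence, and conclude via the continuity of $T(\cdot)$ and the identification $\mathrm{Fix}\,T=A$ that the limit lies in $A$, contradicting $d(w_k,A)\geq\epsilon$. Your added remarks on properness of $h$ and nonemptiness of $A$ only make explicit details the paper leaves implicit.
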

\begin{proof}
Seeking a contradiction, suppose otherwise. In other words, there
is a $\bar{\epsilon}>0$ such that for all $k>0$, there is a $w_{k}$
such that $d(w_{k},A)>\bar{\epsilon}$ but $\|T(w_{k})-w_{k}\|\leq\frac{1}{k}$.
By taking subsequences if necessary, let $\bar{w}=\lim_{k\to\infty}w_{k}$,
which exists by the compactness of $D$. Taking limits as $k\nearrow\infty$
gives us $\bar{w}=T(\bar{w})$, which will in turn imply that $d(\bar{w},A)=0$,
a contradiction.\end{proof}
\begin{thm}
\label{thm:cluster-pts-good}(Cluster points of Algorithm \ref{alg:approx-prox-point})
All cluster points of $\{x_{j}\}$ in Algorithm \ref{alg:approx-prox-point}
are minimizers of $h(\cdot)$. \end{thm}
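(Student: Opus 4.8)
The plan is to read Algorithm \ref{alg:approx-prox-point} as an \emph{inexact proximal point iteration} for $T:=\mathrm{prox}_h$ and to combine the two preceding lemmas with a Lyapunov/Fej\'er argument. First I would record the elementary facts: since $D$ is compact and convex and each $h_i$ is closed proper convex, $h=\delta_D+\sum_{i=1}^{r_2}h_i$ is closed proper convex with $\mathrm{dom}\,h\subseteq D$ bounded, so $A:=\arg\min h$ is nonempty, compact and convex; by Fermat's rule $A=\mathrm{Fix}(T)$; $T$ is firmly nonexpansive, so $\|T(x)-p\|^2+\|T(x)-x\|^2\le\|x-p\|^2$ for all $p\in A$, and $T$ is continuous; and $\mathrm{range}(T)\subseteq\mathrm{dom}(\partial h)\subseteq\mathrm{dom}\,h\subseteq D$.

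Second, I would show the iterates track $T$: $d(x_j,D)\to0$ and $\|\eta_j\|:=\|x_j-T(x_{j-1})\|\to0$. The relations \eqref{eq_m:approx-min} are precisely the hypotheses of Lemma \ref{lem:approx-of-T} with $(x,x^+,\tilde D)=(x_{j-1},x_j,D^j)$ and tolerance $\delta_j$, provided $d(x_{j-1},D)$ is below the lemma's threshold; since $T(x_{j-1})\in D$, the lemma then makes $\|x_j-T(x_{j-1})\|$ small and hence $d(x_j,D)\le\|x_j-T(x_{j-1})\|$ small. As $\delta_j\to0$ this self-improving estimate propagates, giving $d(x_j,D)\to0$ and $\|\eta_j\|\to0$; in particular $x_j\in D+\bar B(0,1)$ for large $j$, so $\{x_j\}$ is bounded.

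Third -- the crux -- I would establish asymptotic regularity $\|T(x_j)-x_j\|\to0$. Put $V_j:=d(x_j,A)^2$. Writing $x_j=T(x_{j-1})+\eta_j$ and applying firm nonexpansiveness with $p=P_A(x_{j-1})\in\mathrm{Fix}(T)$ gives $V_j\le V_{j-1}-\|T(x_{j-1})-x_{j-1}\|^2+\gamma_j$, where $\gamma_j:=2\sqrt{V_0}\,\|\eta_j\|+\|\eta_j\|^2\to0$ with $V_0:=\sup_j V_j<\infty$. Summing and dividing by $N$ forces $\frac1N\sum_{j\le N}\|T(x_{j-1})-x_{j-1}\|^2\to0$, so $\|T(x_{m_k})-x_{m_k}\|\to0$ along some subsequence, and then Lemma \ref{lem:T-diff-implies-dist-small} (applicable since $d(x_{m_k},D)\to0$) gives $V_{m_k}\to0$; hence $\liminf_jV_j=0$. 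To upgrade to $V_j\to0$: if $\limsup_jV_j=L>0$, the contrapositive of Lemma \ref{lem:T-diff-implies-dist-small} yields $\rho>0$ with $\|T(w)-w\|\ge\rho$ whenever $d(w,D)$ is small and $d(w,A)^2\ge L/3$; so for $j$ large, $V_{j-1}\ge L/3$ forces $V_j\le V_{j-1}-\rho^2/2$, while $V_{j-1}<L/3$ forces $V_j<L/2$. A one-line induction then shows that after the first large index with $V_j<L/2$ (which exists, since $\liminf_jV_j=0$) one has $V_i<L/2$ for all later $i$ -- contradicting $\limsup_jV_j=L$. Therefore $d(x_j,A)\to0$, and since $A$ is closed every cluster point of $\{x_j\}$ lies in $A=\arg\min h$.

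I expect the third step to be the main obstacle: because the tolerances $\delta_j$ are only assumed to vanish, and not to be summable, the classical quasi-Fej\'er machinery (which would give convergence of $\|x_j-p\|$ or $\sum_j\|T(x_j)-x_j\|^2<\infty$) is not available, and one has to extract asymptotic regularity by hand, using compactness of $D$ to bound the orbit and force $\gamma_j\to0$, together with the \emph{uniform} quantitative content of Lemma \ref{lem:T-diff-implies-dist-small} to exclude persistent excursions away from $A$. A secondary point, needed before any of this, is the bootstrap showing $d(x_j,D)\to0$, which is what licenses the use of Lemmas \ref{lem:approx-of-T} and \ref{lem:T-diff-implies-dist-small} at the iterates themselves.
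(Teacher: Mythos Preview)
Your argument is correct and follows the same skeleton as the paper's: invoke Lemma~\ref{lem:approx-of-T} to get $\|x_{j}-T(x_{j-1})\|\to 0$, combine firm nonexpansiveness of $T$ with the contrapositive of Lemma~\ref{lem:T-diff-implies-dist-small} to obtain a dichotomy (either $d(x_j,A)$ drops by a fixed positive amount or it was already small), and conclude $d(x_j,A)\to 0$. The execution differs in two minor ways. First, the paper runs the dichotomy directly on $d(x_k,A)$ via the algebraic bound $\epsilon+\sqrt{d(x_k,A)^2-\delta_1^2}\le d(x_k,A)-\epsilon$, valid once $\epsilon$ is chosen small relative to $\delta_1$ and $\mathrm{diam}(D)$; you instead square to $V_j=d(x_j,A)^2$ and use the Lyapunov drop $V_j\le V_{j-1}-\rho^2+\gamma_j$. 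Second, you insert a Ces\`aro averaging step to first obtain $\liminf_j V_j=0$ before the upgrade; this is harmless but redundant, since your own dichotomy already forces the existence of a large index with $V_j<L/2$ (otherwise $V_j$ would decrease by $\rho^2/2$ indefinitely and go negative), so the paper dispenses with it. Both proofs assert $d(x_j,D)\to 0$ with only a sketch; your bootstrap is at least as explicit as the paper's treatment.
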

\begin{proof}
For any $\epsilon_{1}>0$, we make use of Lemma \ref{lem:T-diff-implies-dist-small}
and obtain $\delta_{1}>0$ such that 
\begin{equation}
\|T(w)-w\|\leq\delta_{1}\mbox{ implies }d(w,A)\leq\epsilon_{1}.\label{eq:T-condn-implies-d}
\end{equation}
By Lemma \ref{lem:approx-of-T}, there exists some $K$ large enough
so that 
\begin{equation}
\|x_{k+1}-T(x_{k})\|\leq\epsilon\mbox{ for all }k\geq K.\label{eq:estimates-good}
\end{equation}
Let $\epsilon>0$ be small enough so that $\frac{4\epsilon^{2}+\delta_{1}^{2}}{4\epsilon}>\mbox{diam}(D)$.
Then since $\lim_{k\to\infty}d(x_{k},D)=0$ and $A\subset D$, we
can increase $K$ if necessary so that 
\begin{equation}
\begin{array}{c}
d(x_{k},A)\leq\mbox{diam}(D)<\frac{4\epsilon^{2}+\delta_{1}^{2}}{4\epsilon}\mbox{ for all }k\geq K.\end{array}\label{eq:jumble}
\end{equation}
Let $\bar{x}_{k}=P_{A}(x_{k})$ so that $d(x_{k},A)=\|x_{k}-\bar{x}_{k}\|$.
It is well known from the theory of monotone operators that $T(\cdot)$
is firmly nonexpansive (see for example \cite[Definition 4.1(i), Proposition 12.27]{BauschkeCombettes11}),
so we have 
\begin{equation}
\|T(x_{k})-\bar{x}_{k}\|^{2}+\|x_{k}-T(x_{k})\|^{2}\leq\|x_{k}-\bar{x}_{k}\|^{2}.\label{eq:firmly-nonexp}
\end{equation}
Suppose $k\geq K$. We split our analysis into two cases.

\textbf{Case 1: $d(x_{k},A)>\epsilon_{1}$. }

Then \eqref{eq:T-condn-implies-d} implies $\|T(x_{k})-x_{k}\|>\delta_{1}$.
We have 
\begin{eqnarray}
d(x_{k+1},A) & \overset{\bar{x}_{k}\in A}{\leq} & \|x_{k+1}-\bar{x}_{k}\|\label{eq:dist-go-down}\\
 & \leq & \|x_{k+1}-T(x_{k})\|+\|T(x_{k})-\bar{x}_{k}\|\nonumber \\
 & \overset{\eqref{eq:estimates-good},\eqref{eq:firmly-nonexp}}{\leq} & \epsilon+\sqrt{\|x_{k}-\bar{x}_{k}\|^{2}-\|x_{k}-T(x_{k})\|^{2}}\nonumber \\
 & < & \epsilon+\sqrt{d(x_{k},A)^{2}-\delta_{1}^{2}}\nonumber \\
 & \overset{\scriptsize\mbox{rearrange }\eqref{eq:jumble}}{\leq} & d(x_{k},A)-\epsilon.\nonumber 
\end{eqnarray}

\textbf{Case 2: $d(x_{k},A)\leq\epsilon_{1}$. }

We have 
\begin{eqnarray*}
d(x_{k+1},A)\overset{\bar{x}_{k}\in A}{\leq}\|x_{k+1}-\bar{x}_{k}\| & \leq & \|x_{k+1}-T(x_{k})\|+\|T(x_{k})-\bar{x}_{k}\|\\
 & \overset{\eqref{eq:estimates-good},\eqref{eq:firmly-nonexp}}{\leq} & \epsilon+\|x_{k}-\bar{x}_{k}\|=\epsilon+d(x_{k},A)\leq\epsilon+\epsilon_{1}.
\end{eqnarray*}
The analysis in these two cases implies $d(x_{k+1},A)\leq\epsilon_{1}+\epsilon$
for all $k$ large enough. Since $\epsilon_{1}$ and $\epsilon$ can
be made arbitrarily small, any cluster point $x'$ of $\{x_{k}\}_{k=1}^{\infty}$
must thus satisfy $d(x',A)=0$, or $x'\in A$. 
\end{proof}

\subsection{\label{sub:Satisfy-approx-min}Satisfying \eqref{eq_m:approx-min}
using Dykstra splitting }

Consider the problem of minimizing $h:X\to\mathbb{R}$, where
\begin{equation}
\begin{array}{c}
h(x)=\underset{i=1}{\overset{r_{2}}{\sum}}h_{i}(x)+\underset{i=r_{2}+1}{\overset{r}{\sum}}\delta_{C_{i}}(x).\end{array}\label{eq:multiple-set-primal}
\end{equation}
and each $h_{i}:X\to\mathbb{R}$ is a closed convex function whose
domain is an open set, and each $C_{i}$ is a closed convex set such
that $\cap_{i=r_{2}+1}^{r}C_{i}$ is compact. This formulation is
slightly more general than that of \eqref{eq:approx-point-form}.
Theorem \ref{thm:shift-prox-center} below shows that Algorithm \ref{alg:Ext-Dyk}
can find approximate minimizers to \eqref{eq:multiple-set-primal}
that will satisfy the conditions for moving to a new proximal center
in Algorithm \ref{alg:approx-prox-point}. 
\begin{thm}
\label{thm:shift-prox-center} Consider the problem \eqref{eq:multiple-set-primal}.
For any $\delta>0$, there is some $n>0$ such that when Algorithm
\ref{alg:Ext-Dyk} is applied to solve 
\begin{equation}
\begin{array}{c}
h(x)=\underset{i=1}{\overset{r_{2}}{\sum}}h_{i}(x)+\underset{i=r_{2}+1}{\overset{r}{\sum}}\delta_{C_{i}}(x)+\frac{1}{2}\|x-x_{0}\|^{2},\end{array}\label{eq:subproblem}
\end{equation}
we have a set $D^{(n)}$, and points $x^{(n)}\in X$ and $\tilde{z}\in X$
such that\begin{subequations}\label{eq_m:Dyk-spl-can} 
\begin{eqnarray}
\begin{array}{c}
\|x^{n,\bar{w}}-x^{n,w}\|\end{array} & \leq & \delta\mbox{ for all }w\in\{1,\dots,\bar{w}\},\label{eq:Dyk-spl-can-1}\\
\begin{array}{c}
z_{i}^{n,\bar{w}}\end{array} & \in & \partial h_{i}(x^{n,p(n,i)})\mbox{ for all }i\in\{1,\dots,r\},\label{eq:Dyk-spl-can-2}\\
\!\!\!\!\!\!\!\!\begin{array}{c}
\|\underset{i=1}{\overset{r_{2}}{\sum}}z_{i}^{n,\bar{w}}+\tilde{z}+x^{n,\bar{w}}-x_{0}\|\end{array} & \leq & \delta,\label{eq:Dyk-spl-can-3}\\
\begin{array}{c}
\cap_{i=r_{2}+1}^{r}C_{i}\end{array} & \subset & D^{(n)},\label{eq:Dyk-spl-can-4}\\
\begin{array}{c}
\tilde{z}\end{array} & \in & N_{D^{(n)}}(x^{(n)}),\label{eq:Dyk-spl-can-5}\\
\begin{array}{c}
\|x^{(n)}-x^{n,\bar{w}}\|\end{array} & \leq & \delta.\label{eq:Dyk-spl-can-6}
\end{eqnarray}
\end{subequations}\end{thm}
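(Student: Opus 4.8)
The plan is to view \eqref{eq:subproblem} as an instance of the primal problem \eqref{eq:primal}, with $r_{1}=0$, the present $h_{i}$'s in the role of the $g_{i}$'s (they are lower semicontinuous, being closed), and the $C_{i}$'s as the constraint sets, so that Theorem \ref{thm:convergence} applies to the run of Algorithm \ref{alg:Ext-Dyk} on \eqref{eq:subproblem}. I would extract from that theorem only two facts: from part (i), $\sum_{n}\sum_{w}\|v^{n,w}-v^{n,w-1}\|^{2}<\infty$, whence by Cauchy--Schwarz $\sigma_{n}:=\sum_{w=1}^{\bar w}\|v^{n,w}-v^{n,w-1}\|\to0$; and from part (iii), a subsequence $\{n_{k}\}$ along which $\langle v^{n_{k},\bar w}-v^{n_{k},p(n_{k},i)},z_{i}^{n_{k},\bar w}\rangle\to0$ for every $i\in\{1,\dots,r+1\}$. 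The index $n$ asserted in the theorem will be a suitably large $n_{k}$.

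Three of the six conditions are bookkeeping. Since $x^{n,w}=x_{0}-v^{n,w}$, condition \eqref{eq:Dyk-spl-can-1} asks for $\|v^{n,\bar w}-v^{n,w}\|\le\delta$, and $\|v^{n,\bar w}-v^{n,w}\|\le\sigma_{n}\to0$. Condition \eqref{eq:Dyk-spl-can-2} holds for every $n$: Claim \ref{claim:Fenchel-duality}(b) applied at $w=p(n,i)$ together with \eqref{eq:stagnant-indices} give $z_{i}^{n,\bar w}=z_{i}^{n,p(n,i)}\in\partial h_{i}(x^{n,p(n,i)})$, Assumption \ref{assu:Assumptions}(b) guaranteeing that each $i$ lies in some $S_{n,w}$. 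For \eqref{eq:Dyk-spl-can-3} put $\hat z_{n}:=\sum_{i=r_{2}+1}^{r+1}z_{i}^{n,\bar w}$; since $x^{n,\bar w}-x_{0}=-v^{n,\bar w}=-\sum_{i=1}^{r+1}z_{i}^{n,\bar w}$, the vector inside the norm in \eqref{eq:Dyk-spl-can-3} equals $\tilde z-\hat z_{n}$, so \eqref{eq:Dyk-spl-can-3} merely requires $\|\tilde z-\hat z_{n}\|\le\delta$.

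The substance is the construction of $\tilde z$, $D^{(n)}$ and $x^{(n)}$ for \eqref{eq:Dyk-spl-can-4}--\eqref{eq:Dyk-spl-can-6}. For $i\in\{r_{2}+1,\dots,r\}$, $z_{i}^{n,\bar w}\in N_{C_{i}}(x^{n,p(n,i)})$ yields the halfspace $\tilde H^{n,i}:=\{x:\langle x-x^{n,p(n,i)},z_{i}^{n,\bar w}\rangle\le0\}\supset C_{i}$ of \eqref{eq:derived-halfspace}, with $\delta^{*}_{\tilde H^{n,i}}(z_{i}^{n,\bar w})=\langle x^{n,p(n,i)},z_{i}^{n,\bar w}\rangle$; and line 11 of Algorithm \ref{alg:Ext-Dyk} (carried forward by line 15) with Claim \ref{claim:Fenchel-duality}(c) gives $\delta^{*}_{H^{n,\bar w}}(z_{r+1}^{n,\bar w})=\langle x^{n,p(n,r+1)},z_{r+1}^{n,\bar w}\rangle$, where $H^{n,\bar w}\supset\cap_{i=r_{2}+1}^{r}C_{i}$. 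Aggregating these in the spirit of Fact \ref{fact:agg-halfspaces}, take (assuming $\hat z_{n}\neq0$; the other case is handled below) $\tilde z:=\hat z_{n}$,
\[
D^{(n)}:=\Big\{\,y:\ \langle\hat z_{n},y\rangle\le\max\big\{\langle\hat z_{n},x^{n,\bar w}\rangle,\ \delta^{*}_{\cap_{i}C_{i}}(\hat z_{n})\big\}\,\Big\},
\]
and $x^{(n)}$ the point of $\partial D^{(n)}$ nearest $x^{n,\bar w}$. Then $\cap_{i=r_{2}+1}^{r}C_{i}\subset D^{(n)}$ is \eqref{eq:Dyk-spl-can-4}; $x^{(n)}\in\partial D^{(n)}$ with outward normal $\hat z_{n}$ is \eqref{eq:Dyk-spl-can-5}; and $\|x^{(n)}-x^{n,\bar w}\|=\Delta_{n}/\|\hat z_{n}\|$ with $\Delta_{n}:=\big(\delta^{*}_{\cap_{i}C_{i}}(\hat z_{n})-\langle\hat z_{n},x^{n,\bar w}\rangle\big)_{+}$. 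Using $\cap_{i}C_{i}\subset\bigcap_{i=r_{2}+1}^{r}\tilde H^{n,i}\cap H^{n,\bar w}$, subadditivity of support functions, and the two evaluations above,
\[
\Delta_{n}\ \le\ \Big(\sum_{i=r_{2}+1}^{r+1}\big\langle v^{n,\bar w}-v^{n,p(n,i)},\,z_{i}^{n,\bar w}\big\rangle\Big)_{+},
\]
which tends to $0$ along $\{n_{k}\}$ by Theorem \ref{thm:convergence}(iii).

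The step I expect to be the main obstacle is converting $\Delta_{n_{k}}\to0$ into \eqref{eq:Dyk-spl-can-6}, since $\|\hat z_{n_{k}}\|$ might be small; I would avoid bounding $\|\hat z_{n}\|$ from below and split into cases. If $\|\hat z_{n_{k}}\|>\delta$, then $\|x^{(n_{k})}-x^{n_{k},\bar w}\|=\Delta_{n_{k}}/\|\hat z_{n_{k}}\|<\Delta_{n_{k}}/\delta$, which is $\le\delta$ for $k$ large. If $\|\hat z_{n_{k}}\|\le\delta$, take instead $\tilde z:=0$, $D^{(n_{k})}:=X$, $x^{(n_{k})}:=x^{n_{k},\bar w}$: then \eqref{eq:Dyk-spl-can-4}--\eqref{eq:Dyk-spl-can-6} are immediate while the left side of \eqref{eq:Dyk-spl-can-3} is $\|{-}\hat z_{n_{k}}\|\le\delta$. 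Choosing $k$ large enough that $\sigma_{n_{k}}\le\delta$ and $\Delta_{n_{k}}\le\delta^{2}$, and setting $n:=n_{k}$, gives all of \eqref{eq_m:Dyk-spl-can}. One remaining bookkeeping item is the inclusion $H^{n,\bar w}\supset\cap_{i=r_{2}+1}^{r}C_{i}$ used above; it is secured by a natural choice of $\tilde C^{n,w},H^{n,w}$ in Algorithm \ref{alg:Ext-Dyk} --- e.g.\ $\tilde C^{n,w}=X$, which forces $z_{r+1}^{n,w}=0$ throughout and removes the $(r+1)$ term from $\hat z_{n}$ entirely.
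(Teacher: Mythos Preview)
Your proposal is correct and follows essentially the same strategy as the paper: aggregate the constraint-set duals into $\hat z_n=\sum_{i=r_2+1}^{r+1}z_i^{n,\bar w}$, build $D^{(n)}$ as a halfspace with that outward normal (or $X$ when $\hat z_n$ is small), and split into two cases according to the size of $\|\hat z_n\|$. The paper's split is on $\liminf_n\|\hat z_n\|$ and its Case~2 bounds $\|\hat z_n\|\,d(\partial D^{(n)},x^*)$ by the duality gap via a calculation that mimics \eqref{eq:From-8}; you instead split per index against the target $\delta$ and bound $\Delta_n$ directly by $\sum_{i}\langle v^{n,\bar w}-v^{n,p(n,i)},z_i^{n,\bar w}\rangle$ from Theorem~\ref{thm:convergence}(iii), anchoring at $x^{n,\bar w}$ rather than at $x^*$. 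These are streamlined tactical choices within the same proof architecture; your handling of the $(r{+}1)$ term by taking $\tilde C^{n,w}=X$ (so $z_{r+1}\equiv0$) is a clean way to sidestep the need for $\cap_i C_i\subset H^{n,\bar w}$, which the paper uses implicitly.
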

\begin{proof}
Define $z_{0}^{n,\bar{w}}$ to be 
\begin{equation}
\begin{array}{c}
z_{0}^{n,\bar{w}}=\underset{i=r_{2}+1}{\overset{r+1}{\sum}}z_{i}^{n,\bar{w}}.\end{array}\label{eq:sum-z}
\end{equation}
Theorem \ref{thm:convergence} says that for any $\delta>0$, we can
find $n>0$ such that the first 3 conditions in \eqref{eq_m:Dyk-spl-can}
hold if $\tilde{z}$ were chosen to be $z_{0}^{n,\bar{w}}$. We separate
into two cases, and discuss how the set $D^{(n)}$ (which will actually
be either the whole space $X$ or a halfspace) and the point $x^{(n)}$
are constructed. 

\textbf{Case 1: $\liminf_{n\to\infty}\|z_{0}^{n,\bar{w}}\|=0$.}

By taking subsequences $\{n_{k}\}_{k}$, we can assume that $\lim_{k\to\infty}\|z_{0}^{n_{k},\bar{w}}\|=0$.
Then the set $D^{(n)}$ can be chosen to be $X$, and $x^{(n)}$ can
be chosen to be $x^{*}$, the minimizer of \eqref{eq:subproblem}.
The vector $\tilde{z}$ can be chosen to be zero, and the inequalities
in \eqref{eq_m:Dyk-spl-can} can be easily seen to be satisfied.

\textbf{Case 2: $\liminf_{n\to\infty}\|z_{0}^{n,\bar{w}}\|>0$.}

Recall that for $i\in\{r_{2}+1,\dots,r\}$, the dual vector $z_{i}^{n,\bar{w}}$
was constructed so that $z_{i}^{n,\bar{w}}\in N_{C_{i}}(x^{n,p(n,i)})$,
and that $z_{r+1}^{n,\bar{w}}\in N_{H^{n,\bar{w}}}(x^{n,p(n,r+1)})$.
For $i\in\{r_{2}+1,\dots,r+1\}$, define the halfspace $\tilde{H}^{n,i}$
to be the halfspace with $x^{n,p(n,i)}$ on its boundary and outward
normal vector $z_{i}^{n,\bar{w}}$ like in \eqref{eq:derived-halfspace}.
Note that $\tilde{H}^{n,i}\supset C_{i}$ for $i\in\{r_{2}+1,\dots,r\}$,
and $\tilde{H}^{n,r+1}\supset H^{n,\bar{w}}$. 

Let $D^{(n)}$ be the halfspace with outward normal $z_{0}^{n,\bar{w}}$
such that $D^{(n)}\supset\bigcap_{i=r_{2}+1}^{r+1}\tilde{H}^{n,i}$,
and $\partial D^{(n)}\cap\bigcap_{i=r_{2}+1}^{r+1}\tilde{H}^{n,i}\neq\emptyset$.
Through Fact \ref{fact:agg-halfspaces}, this choice of $D^{(n)}$
would give us 
\begin{equation}
\begin{array}{c}
\delta_{D^{(n)}}^{*}(z_{0}^{n,\bar{w}})\leq\underset{i=r_{2}+1}{\overset{r}{\sum}}\delta_{C_{i}}^{*}(z_{i}^{n,\bar{w}})+\delta_{H^{n,\bar{w}}}^{*}(z_{r+1}^{n,\bar{w}}).\end{array}\label{eq:aggregate-dual}
\end{equation}
We now show how to satisfy \eqref{eq:Dyk-spl-can-5} and \eqref{eq:Dyk-spl-can-6}
with $\tilde{z}=z_{0}^{n,\bar{w}}$. Let $x^{*}$ be the optimal primal
solution of \eqref{eq:subproblem}. We now want to show that we can
choose a further subsequence if necessary so that $\lim_{k\to\infty}d(\partial D^{(n_{k})},x^{*})=0$.
From the definition of the support function and the fact that $x^{*}\in\bigcap_{i=1}^{r}C_{i}\subset D^{(n)}$,
we have 

\begin{equation}
\begin{array}{c}
\delta_{D^{(n)}}^{*}(z_{0}^{n,\bar{w}})-\langle x^{*},z_{0}^{n,\bar{w}}\rangle=\delta_{D^{(n)}-x^{*}}^{*}(z_{0}^{n,\bar{w}})=\|z_{0}^{n,\bar{w}}\|d(\partial D^{(n)},x^{*}).\end{array}\label{eq:translate}
\end{equation}
We now mimic \eqref{eq:From-8} to obtain 
\begin{eqnarray}
 &  & \begin{array}{c}
\frac{1}{2}\|x_{0}-x^{*}\|^{2}+\underset{i=1}{\overset{r_{2}}{\sum}}h_{i}(x^{*})+\underset{i=r_{2}+1}{\overset{r}{\sum}}\delta_{C_{i}}(x^{*})-F_{H^{n,\bar{w}}}(z^{n,\bar{w},},z_{r+1}^{n,\bar{w}})\end{array}\nonumber \\
 & \overset{\eqref{eq:SHQP-dual}}{=} & \begin{array}{c}
\frac{1}{2}\|x_{0}-x^{*}\|^{2}+\underset{i=1}{\overset{r_{2}}{\sum}}[h_{i}(x^{*})+h_{i}^{*}(z_{i}^{n,\bar{w}})]-\langle x_{0},\underset{i=1}{\overset{r}{\sum}}z_{i}^{n,\bar{w}}\rangle\end{array}\label{eq:long-derv}\\
 &  & \begin{array}{c}
+\frac{1}{2}\|\underset{i=1}{\overset{r}{\sum}}z_{i}^{n,\bar{w}}\|^{2}+\underset{i=r_{2}+1}{\overset{r}{\sum}}\delta_{C_{i}}^{*}(z_{i}^{n,\bar{w}})+\delta_{H^{n,\bar{w}}}^{*}(z_{r+1}^{n,\bar{w}})\end{array}\nonumber \\
 & \overset{\scriptsize\mbox{Fenchel duality, }\eqref{eq:aggregate-dual}}{\geq} & \begin{array}{c}
\frac{1}{2}\|x_{0}-x^{*}\|^{2}+\bigg[\underset{i=1}{\overset{r}{\sum}}\langle x^{*},z_{i}^{n,\bar{w}}\rangle-\underset{i=r_{2}+1}{\overset{r}{\sum}}\langle x^{*},z_{i}^{n,\bar{w}}\rangle\bigg]\end{array}\nonumber \\
 &  & \begin{array}{c}
-\langle x_{0},\underset{i=1}{\overset{r}{\sum}}z_{i}^{n,\bar{w}}\rangle+\frac{1}{2}\|\underset{i=1}{\overset{r}{\sum}}z_{i}^{n,\bar{w}}\|^{2}+\delta_{D^{(n)}}^{*}(z_{0}^{n,\bar{w}})\end{array}\nonumber \\
 & \overset{\eqref{eq:sum-z},\eqref{eq:translate}}{=} & \begin{array}{c}
\frac{1}{2}\|\underset{i=1}{\overset{r}{\sum}}z_{i}^{n,\bar{w}}+x^{*}-x_{0}\|^{2}+\|z_{0}^{n,\bar{w}}\|d(\partial D^{(n)},x^{*})\geq0.\end{array}\nonumber 
\end{eqnarray}
Theorem \ref{thm:convergence}(i)(iv) shows that the formula in the
first line of \eqref{eq:long-derv} has a limit of zero, so by the
squeeze theorem, the last term in \eqref{eq:long-derv} also has limit
zero as $n\to\infty$. This means that $\lim_{k\to\infty}d(\partial D^{(n_{k})},x^{*})=0$.
Thus $x^{(n)}$ can be chosen as the projection of $x^{*}$ onto $\partial D^{(n)}$.
We then have $\lim_{k\to\infty}\|x^{(n)}-x^{*}\|=0$, and so the inequalities
in \eqref{eq_m:Dyk-spl-can} can be easily seen to be satisfied.\end{proof}
\begin{rem}
(Achieving \eqref{eq:approx-min-2}) A final detail is to find an
implementable way for checking that $d(x^{(n)},\cap_{i=r_{2}+1}^{r}C_{i})$
is indeed small. Note that $x^{n,p(n,i)}\in C_{i}$ for all $i\in\{r_{2}+1,\dots,r\}$,
so we have the estimate $d(x^{n,\bar{w}},C_{i})\leq\|x^{n,\bar{w}}-x^{n,p(n,i)}\|$.
One can easily make use of the compactness of $\cap_{i=r_{2}+1}^{r}C_{i}$
to prove that for all $\delta_{1}>0$, there is a $\delta_{2}>0$
such that 
\[
\max_{r_{2}+1\leq i\leq r}d(x,C_{i})\leq\delta_{2}\mbox{ implies }d(x,\cap_{i=r_{2}+1}^{r}C_{i})\leq\delta_{1}.
\]
Putting this fact together with Theorem \ref{thm:shift-prox-center}
allows us to find a point satisfying \eqref{eq_m:approx-min} in Algorithm
\ref{alg:approx-prox-point}.
\begin{rem}
(Treating multiple sets in Theorem \ref{thm:cluster-pts-good}) We
remark that if we had analyzed Theorem \ref{thm:cluster-pts-good}
for the case where there are more than one indicator functions, then
analyzing \eqref{eq:perturbed-h} would mean having to introduce constraint
qualifications (for example, a Lipschitzian error bound assumption)
needed to deal with issues related to the stability of sets under
perturbations. 
\end{rem}
\end{rem}
\bibliographystyle{amsalpha}
\bibliography{../refs}

\newcommand{\etalchar}[1]{$^{#1}$}
\providecommand{\bysame}{\leavevmode\hbox to3em{\hrulefill}\thinspace}
\providecommand{\MR}{\relax\ifhmode\unskip\space\fi MR }
\providecommand{\MRhref}[2]{%
  \href{http://www.ams.org/mathscinet-getitem?mr=#1}{#2}
}
\providecommand{\href}[2]{#2}
\begin{thebibliography}{HWRL17}

\bibitem[BB96]{BB96_survey}
H.H. Bauschke and J.M. Borwein, \emph{On projection algorithms for solving
  convex feasibility problems}, SIAM Rev. \textbf{38} (1996), 367--426.

\bibitem[BBL99]{BBL99}
H.H. Bauschke, J.M. Borwein, and W.~Li, \emph{Strong conical hull intersection
  property, bounded linear regularity, {J}ameson's property ({G}), and error
  bounds in convex optimization}, Math. Program., Ser. A \textbf{86} (1999),
  no.~1, 135--160.

\bibitem[BC08]{Bauschke-Combettes-Dykstra-split-2008}
H.H. Bauschke and P.L. Combettes, \emph{A {D}ykstra-like algorithm for two
  monotone operators}, Pacific J. Optim. \textbf{4} (2008), 383--391.

\bibitem[BC11]{BauschkeCombettes11}
\bysame, \emph{Convex analysis and monotone operator theory in {H}ilbert
  spaces}, Springer, 2011.

\bibitem[BD85]{BD86}
J.P. Boyle and R.L. Dykstra, \emph{A method for finding projections onto the
  intersection of convex sets in {H}ilbert spaces}, Advances in Order
  Restricted Statistical Inference, Lecture notes in Statistics, Springer, New
  York, 1985, pp.~28--47.

\bibitem[BD05]{Burke_Deng_2005}
J.V. Burke and S.~Deng, \emph{Weak sharp minima revisited. {II}. {A}pplication
  to linear regularity and error bounds}, Math. Program., Ser. B \textbf{104}
  (2005), no.~2-3, 235--261.

\bibitem[Bec15]{Beck_alt_min_SIOPT_2015}
A.~Beck, \emph{On the convergence of alternating minimization for convex
  programming with applications to iteratively reweighted least squares and
  decomposition schemes}, SIAM J. Optim. \textbf{25} (2015), no.~1, 185--209.

\bibitem[BPC{\etalchar{+}}10]{Boyd_Eckstein_ADMM_review}
S.~Boyd, N.~Parikh, E.~Chu, B.~Peleato, and J.~Eckstein, \emph{Distributed
  optimization and statistical learning via the alternating direction method of
  multipliers}, Foundations and Trends in Machine Learning \textbf{3} (2010),
  no.~1, 1--122.

\bibitem[BT09]{BeckTeboulle2009}
A.~Beck and M.~Teboulle, \emph{A fast iterative shrinkage-thresholding
  algorithm for linear inverse problems}, SIAM J. Imaging Sciences \textbf{2}
  (2009), no.~1, 183--202.

\bibitem[BT13]{Beck_Tetruashvili_2013}
A.~Beck and L.~Tetruashvili, \emph{On the convergence of block coordinate
  descent type methods}, SIAM J. Optim. \textbf{23} (2013), no.~4, 2037--2060.

\bibitem[CCC{\etalchar{+}}12]{CensorChenCombettesDavidiHerman12}
Y.~Censor, W.~Chen, P.~L. Combettes, R.~Davidi, and G.T. Herman, \emph{On the
  effectiveness of projection methods for convex feasibility problems with
  linear inequality constraints}, Comput. Optim. Appl. \textbf{51} (2012),
  1065--1088.

\bibitem[CDH10]{Censor_Davidi_Herman_Superiorization}
Y.~Censor, R.~Davidi, and G.T. Herman, \emph{Perturbation resilience and
  superiorization of iterative algorithms}, Inverse Problems \textbf{26}
  (2010), no.~6, 065008.

\bibitem[CDH{\etalchar{+}}14]{Censor_Davidi_Herman_Schulte_Tetruashvili}
Y.~Censor, R.~Davidi, G.T. Herman, R.W. Schulte, and L.~Tetruashvili,
  \emph{Projected subgradient minimization versus superiorization}, Journal of
  Optimization Theory and Applications \textbf{160} (2014), 730--747.

\bibitem[CP11]{Combettes_Pesquet}
P.L. Combettes and J.-C. Pesquet, \emph{Proximal splitting methods in signal
  processing}, Fixed-Point Algorithms for Inverse Problems in Science and
  Engineering (H.H. Bauschke, R.S. Burachik, P.L. Combettes, V.~Elser, D.R.
  Luke, and H.~Wolkowicz, eds.), 2011, pp.~185--212.

\bibitem[Deu01a]{Deutsch01_survey}
F.~Deutsch, \emph{Accelerating the convergence of the method of alternating
  projections via a line search: A brief survey}, Inherently Parallel
  Algorithms in Feasibility and Optimization and their Applications
  (D.~Butnariu, Y.~Censor, and S.~Reich, eds.), Elsevier, 2001, pp.~203--217.

\bibitem[Deu01b]{Deustch01}
\bysame, \emph{Best approximation in inner product spaces}, Springer, 2001, CMS
  Books in Mathematics.

\bibitem[Dyk83]{Dykstra83}
R.L. Dykstra, \emph{An algorithm for restricted least-squares regression}, J.
  Amer. Statist. Assoc. \textbf{78} (1983), 837--842.

\bibitem[ER11]{EsRa11}
R.~Escalante and M.~Raydan, \emph{Alternating projection methods}, SIAM, 2011.

\bibitem[GM89]{Gaffke_Mathar}
N.~Gaffke and R.~Mathar, \emph{A cyclic projection algorithm via duality},
  Metrika \textbf{36} (1989), 29--54.

\bibitem[Han88]{Han88}
S.P. Han, \emph{A successive projection method}, Math. Programming \textbf{40}
  (1988), 1--14.

\bibitem[Han89]{Han89}
\bysame, \emph{A decomposition method and its application to convex
  programming}, Mathematics of Operations Research \textbf{14} (1989),
  237--248.

\bibitem[HD97]{Hundal-Deutsch-97}
H.S. Hundal and F.~Deutsch, \emph{Two generalizations of {D}ykstra's cyclic
  projections algorithm}, Math. Programming \textbf{77} (1997), 335--355.

\bibitem[HWRL17]{Hong_Wang_Razaviyayn_Luo_MP_BCD}
M.~Hong, X.~Wang, M.~Razaviyayn, and Z.~Luo, \emph{Iteration complexity
  analysis of block coordinate descent methods}, Math. Program. \textbf{163}
  (2017), 85--114.

\bibitem[Kru06]{Kruger_06}
A.Y. Kruger, \emph{About regularity of collections of sets}, Set-Valued Anal.
  \textbf{14} (2006), 187--206.

\bibitem[Mar70]{Martinet}
B.~Martinet, \emph{R\'{e}gularisation d'in\'{e}quations variationnelles par
  approximations successives}, Rev. Fran\c{c}aise Informat. Rech. Op\'{e}r.
  \textbf{4} (1970), 154--158.

\bibitem[Ned11]{Nedic_MP_2011}
A.~Nedi{\'{c}}, \emph{Random algorithms for convex minimization problems}, Math
  Program. Ser. B \textbf{225} (2011), 225--253.

\bibitem[Ned15]{Nedich_survey}
A.~Nedich, \emph{Convergence rate of distributed averaging dynamics and
  optimization in networks}, Foundations and Trends in Systems and Control
  \textbf{2} (2015), no.~1, 1--100.

\bibitem[Nes04]{Nesterov_book}
Y.~Nesterov, \emph{Introductory lectures on convex optimization}, Kluwer, 2004.

\bibitem[NP09]{Helou_Neto_De_Pierro_SIOPT_2009}
E.S.~Helou Neto and A.R.~De Pierro, \emph{Incremental subgradients for
  constrained convex optimization: A unified framework and new methods}, SIAM
  J. Optim. \textbf{20} (2009), 1547--1572.

\bibitem[NY04]{Ng_Yang_2004}
K.F. Ng and W.H. Yang, \emph{Regularities and their relations to error bounds},
  Math. Program., Ser. A \textbf{99} (2004), 521--538.

\bibitem[Pan16]{Pang_DBAP}
C.H.J. Pang, \emph{The supporting halfspace - quadratic programming strategy
  for the dual of the best approximation problem}, SIAM J. Optim. \textbf{26}
  (2016), no.~4, 2591--2619.

\bibitem[Pie84]{Pierra84}
G.~Pierra, \emph{Decomposition through formalization in a product space}, Math.
  Programming \textbf{28} (1984), 96--115.

\bibitem[RNV09]{Ram_Nedic_Veeravalli_SIOPT_2009}
S.S. Ram, A.~Nedi\'{c}, and V.~Veeravalli, \emph{Incremental stochastic
  subgradient algorithms for convex optimization}, SIAM J. Optim. \textbf{20}
  (2009), 691--717.

\bibitem[Roc76]{Rockafellar-Prox-Point}
R.T. Rockafellar, \emph{Monotone operators and the proximal point algorithm},
  SIAM J. Control Optim. \textbf{14} (1976), 877--898.

\bibitem[Tse93]{Tseng-93-Dual-ascent}
P.~Tseng, \emph{Dual coordinate ascent methods for non-strictly convex
  minimization}, Mathematical Programming \textbf{59} (1993), 231--247.

\bibitem[Tse08]{Tseng_APG_2008}
\bysame, \emph{On accelerated proximal gradient methods for convex-concave
  optimization}, manuscript.

\bibitem[TY09a]{Tseng_Yun_JOTA_2009}
P.~Tseng and S.~Yun, \emph{Block-coordinate gradient descent method for
  linearly constrained nonsmooth separable optimization}, J. Optim. Theory
  Appl. \textbf{140} (2009), 513--535.

\bibitem[TY09b]{Tseng_YUn_MAPR_2009}
\bysame, \emph{A coordinate gradient descent method for nonsmooth separable
  minimization}, Math. Program. Ser. B \textbf{117} (2009), no.~117, 387--423.

\bibitem[Wri15]{Wright_BCD_MAPR2015}
S.J. Wright, \emph{Coordinate descent algorithms}, Math. Program. \textbf{151}
  (2015), 3--34.

\end{thebibliography}

\end{document}